\DeclareMathOperator{\R}{\mathbb{R}}
\DeclareMathOperator{\rk}{rk}
\DeclareMathOperator{\Ker}{Ker} 
\DeclareMathOperator{\Image}{Im}
\DeclareMathOperator{\Span}{Span}
\DeclareMathOperator{\ch}{ch}
\DeclareMathOperator{\sh}{sh}
\DeclareMathOperator{\Gr}{Gr}
\newcommand{\mat}{w}
\DeclarePairedDelimiter{\norm}{\,\|}{\|\,}
\newcommand{\comp}{\circ}
\newcommand{\Sc}{\mathcal{S}} 
\newcommand{\Oc}{\mathcal{O}}
\renewcommand{\Mc}{\mathcal{M}}
\newcommand{\mlb}{\mathcal{I}} 
\newcommand{\Qc}{\mathcal{Q}}
\newcommand{\Ec}{\mathcal{E}} 
\newcommand{\Bc}{\mathcal{B}} 
\newcommand{\Uc}{\mathcal{U}} 
\newcommand{\Vc}{\mathcal{V}} 
\newcommand{\Wc}{\mathcal{W}} 
\newcommand{\GL}{\mathrm{GL}}
\newcommand{\notimplies}{\centernot\implies}
\newcommand{\bigslant}[2]{{\raisebox{.2em}{$#1$}\left/\raisebox{-.2em}{$#2$}\right.}}
\newtheorem{theorem}{Theorem}[section]
\newtheorem{lemma}[theorem]{Lemma}
\newtheorem{proposition}[theorem]{Proposition}
\newtheorem{corollary}[theorem]{Corollary}
\theoremstyle{definition}
\newtheorem{problem}[theorem]{Problem}
\newtheorem{definition}[theorem]{Definition}
\theoremstyle{remark}
\newtheorem{remark}[theorem]{Remark}
\begin{document}
\title{Maximal lower bounds in the L\"owner order}
\author{Nikolas STOTT}
\address{INRIA and CMAP, \'Ecole polytechnique, CNRS, Universit\'e Paris-Saclay}
\email{nikolas.stott@inria.fr}
\thanks{The author was 
partially supported by the ANR projects {CAFEIN, ANR-12-INSE-0007} and MALTHY, ANR-13-INSE-0003, by ICODE and by the academic research chair ``Complex Systems Engineering'' of \'Ecole polytechnique - THALES - FX - DGA - DASSAULT AVIATION - DCNS Research - ENSTA ParisTech - T{\'e}l{\'e}com ParisTech - Fondation ParisTech - FDO ENSTA and by the PGMO programme of EDF and FMJH}
\subjclass[2010]{Primary 47A63, Secondary 15A63, 06F20, 81Q10. Keywords: L\"owner order, Riesz spaces, antilattices, indefinite orthogonal groups, ellipsoids}






\begin{abstract}
We show that the set of maximal lower bounds of two symmetric matrices
with respect to the L\"owner order can be identified to the quotient set
$O(p,q)/(O(p)\times O(q))$. Here, $(p,q)$ denotes the inertia of the
difference of the two matrices, $O(p)$ is the $p$-th orthogonal group,
and $O(p,q)$ is the indefinite orthogonal group arising from a
quadratic form with inertia $(p,q)$.
We also show that a similar result holds for positive semidefinite maximal lower bounds with maximal rank of two positive semidefinite matrices.
We exhibit a correspondence between the maximal lower bounds $C$ of two matrices $A,B$ and certain pairs of subspaces, describing the directions on which the quadratic form associated with $C$ is tangent to the one associated with $A$ or $B$.
The present results refines a theorem from Kadison that characterizes the existence of the infimum of two symmetric matrices and a theorem from Moreland, Gudder and Ando on the existence of the positive semidefinite infimum of two positive semidefinite matrices.
\end{abstract}


\maketitle

\section{Introduction}

The L\"owner partial order is a basic notion in matrix theory~\cite{Loewner,bhatiaPDM}. It describes the pointwise ordering of real quadratic forms. These forms constitute an ordered vector space that is not a lattice, meaning that two quadratic forms may have two uncomparable minimal upper bounds, or dually, two uncomparable maximal lower bounds.
A classical result by Kadison shows that it is an antilattice, meaning that greatest lower bounds exist only in trivial cases:
\begin{theorem}[Kadison, see~\cite{Kadison51}]
\label{thm:kadison}
Two symmetric matrices cannot have a greatest lower bound in the L\"owner
order unless they are comparable in this order.
\end{theorem}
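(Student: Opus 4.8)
The plan is to prove the equivalent statement that, for symmetric $A,B$, the infimum $A\wedge B$ exists in the L\"owner order if and only if $A$ and $B$ are comparable. Since the translation $C\mapsto C-A$ is an order isomorphism of the space of symmetric matrices sending the pair $\{A,B\}$ to $\{0,M\}$ with $M:=B-A$, the infimum $A\wedge B$ exists iff $0\wedge M$ exists, and $A,B$ are comparable iff $M$ is semidefinite. So it suffices to show that $0\wedge M$ exists iff $M$ is semidefinite. One direction is immediate: if $M\succeq0$ then $0\wedge M=0$, and if $M\preceq0$ then $0\wedge M=M$. The content is the converse.

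Suppose then that $M$ is indefinite; I must show $\{0,M\}$ has no greatest lower bound. By Sylvester's law of inertia there is an invertible $P$ with $P^\top M P=\operatorname{diag}(I_p,-I_q,0_r)=:M_0$, where $p,q\ge1$. The congruence $C\mapsto P^\top C P$ is an order automorphism fixing $0$, so $0\wedge M$ exists iff $0\wedge M_0$ exists, and I may assume $M=M_0$. The set $L$ of common lower bounds of $0$ and $M$ is nonempty (for instance $-tI\in L$ once $t$ is large), and a greatest lower bound is precisely a maximum of $L$.

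The key idea is a symmetry argument. Write $J:=\operatorname{diag}(I_p,-I_q)$ and let $H$ be the group of congruences $\phi_P\colon C\mapsto P^\top C P$ with $P^\top M P=M$; it contains the block subgroup $O(p,q)\times\GL_r$ acting on $\R^{p+q}\oplus\R^{r}$, where $O(p,q)$ is the stabilizer of $J$. Each $\phi_P$ with $P\in H$ is an order automorphism fixing both $0$ and $M$, hence permutes $L$ while preserving the order; therefore, if a maximum $G$ of $L$ existed, it would be unique and satisfy $P^\top G P=G$ for all $P\in H$. I would then compute these invariants. Taking $P=\operatorname{diag}(I_{p+q},2I_r)\in H$ forces $G=\operatorname{diag}(G_{11},0)$ with $G_{11}$ symmetric of size $p+q$. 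Invariance of $G_{11}$ under all of $O(p,q)$ means exactly that $G_{11}$ is an $O(p,q)$-invariant symmetric bilinear form on the standard representation; since that representation is irreducible for $p,q\ge1$, the space of such forms is one-dimensional and spanned by the defining form $J$. Thus $G_{11}=\lambda J$ and $G=\lambda M$ for some $\lambda\in\R$.

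To conclude, no $\lambda M$ lies in $L$: the condition $\lambda M\preceq0$ reads $-\lambda M\succeq0$, forcing $\lambda=0$ because $M$ is indefinite, while $\lambda M\preceq M$ reads $(1-\lambda)M\succeq0$, forcing $\lambda=1$; these are incompatible. Hence $L$ has no maximum, i.e.\ $0\wedge M$ does not exist, which completes the converse. The main obstacle is the invariant-theoretic step, namely the verification that the only symmetric matrices fixed by the congruence action of $O(p,q)$ are the multiples of $J$: this can be done by a direct Lie-algebra computation, solving $X^\top G_{11}+G_{11}X=0$ for all $X$ in $\mathfrak o(p,q)$ to obtain $G_{11}=\lambda J$, or conceptually from irreducibility of the standard $O(p,q)$-module together with a Schur-type argument. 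A secondary point requiring care is the clean treatment of the degenerate block $0_r$, which the scaling element above disposes of.
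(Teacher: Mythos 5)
Your proof is correct, but it follows a genuinely different route from the paper's. The paper deduces Kadison's theorem from its main parametrization result: by Theorem~\ref{thm:main_theo} and Corollary~\ref{cor:homeo}, the set of maximal lower bounds of $A,B$ is homeomorphic to $\Oc(p,q)/\big(\Oc(p)\times\Oc(q)\big)\cong\R^{pq}$, and a greatest lower bound, being necessarily the unique maximal lower bound, can exist only if this set is a singleton, i.e.\ $pq=0$, i.e.\ $A$ and $B$ are comparable. You instead give a direct impossibility argument that never parametrizes (nor even exhibits more than one element of) the set of lower bounds: after normalizing to the pair $\{0,J_{p,q,r}\}$, you observe that the congruence stabilizer of this pair acts by order automorphisms on the set $L$ of common lower bounds, so a maximum of $L$ would be a fixed point of the action; the scaling in the kernel block kills the off-diagonal and $0_r$ blocks, $\Oc(p,q)$-invariance forces the remaining block to be $\lambda J_{p,q}$, and no matrix $\lambda J_{p,q,r}$ is a common lower bound of $0$ and $J_{p,q,r}$ when $p,q\geq 1$. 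Both arguments exploit the $\Oc(p,q)$-symmetry, but the paper establishes transitivity of the action on maximal lower bounds (via the polar decomposition, Lemma~\ref{thm:gallier}), which yields the full structure of the solution set with Kadison as a one-line corollary, whereas you need only the much cheaper computation of the fixed points --- sufficient for Kadison, self-contained, but silent about the set of maximal lower bounds. One step of yours deserves care: for \emph{real} representations, irreducibility alone does not bound the space of invariant symmetric bilinear forms (real Schur only gives a division-algebra commutant), so the load-bearing justification of $G_{11}=\lambda J_{p,q}$ is the Lie-algebra computation you indicate --- writing $X=J_{p,q}A$ with $A$ antisymmetric, the condition $X^TG_{11}+G_{11}X=0$ becomes $G_{11}J_{p,q}A=AJ_{p,q}G_{11}$ for all antisymmetric $A$, which indeed forces $G_{11}=\lambda J_{p,q}$ for all $p,q\geq 1$ --- or, alternatively, an appeal to \emph{absolute} irreducibility of the standard $\Oc(p,q)$-module.
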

We refer to the work of Kalauch, Lemmens, and van Gaans~\cite{KLG}
for a recent approach to Kadison's theorem and generalizations in the
setting of Riesz spaces. 
Lower bounds of symmetric matrices have also been extensively studied in the setting of \emph{quantum observables}~\cite{Ando, Gudder, HCQ}, where the main motivation is 
the uniqueness of a positive semidefinite maximal lower bound.
Moreland and Gudder have solved this problem in~\cite{Gudder}.
Their result has been generalized to any pair of positive semidefinite bounded self-adjoint operators by Ando~\cite{Ando}. His proof involved the notion of \emph{generalized short}, which  in the finite dimensional case is defined for positive semidefinite matrices $X,Y$ by
\begin{align*}
[Y]X = \max \{ Z \mid 0 \preceq Z \preceq X,\; \Image Z \subseteq \Image Y \} \,.
\end{align*}
Their results show that the uniqueness of a positive semidefinite maximal lower bound is decided by the comparability of such generalized shorts:
\begin{theorem}[Moreland and Gudder, Ando, see~\cite{Gudder,Ando}]
\label{thm:gudder}
Two positive semidefinite matrices $A$ and $B$ cannot have a unique positive semidefinite maximal lower bound unless the generalized shorts $[A]B$ and $[B]A$ are comparable.
\end{theorem}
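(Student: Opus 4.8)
The plan is to prove the contrapositive: assuming $[A]B$ and $[B]A$ are not comparable, I will exhibit two distinct positive semidefinite maximal lower bounds of $A$ and $B$. Write $P=[A]B$ and $Q=[B]A$. The first step is a reduction showing that the positive semidefinite lower bounds of $\{A,B\}$ are exactly those of $\{P,Q\}$. Indeed, if $0\preceq C$ with $C\preceq A$ and $C\preceq B$, then $0\preceq C\preceq A$ forces $\Ker A\subseteq\Ker C$, hence $\Image C\subseteq\Image A$; together with $C\preceq B$ this places $C$ in the set over which the short $[A]B$ is the maximum, so $C\preceq P$, and symmetrically $C\preceq Q$. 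Conversely, any $0\preceq C\preceq P,Q$ satisfies $C\preceq P\preceq B$ and $C\preceq Q\preceq A$, since $P\preceq B$ and $Q\preceq A$ by definition of the shorts. Thus the two families of maximal lower bounds coincide, and it suffices to produce two of them for $\{P,Q\}$.

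The key structural input is the identity $\Image P=\Image Q=\Image A\cap\Image B=:W$. This follows from the fact that the range of the short of a positive semidefinite matrix $M$ onto a subspace $S$ equals $S\cap\Image M$; applying this to $[A]B$ and to $[B]A$ gives $\Image A\cap\Image B$ in both cases. The consequence is decisive: every positive semidefinite lower bound $C$ of $\{P,Q\}$ has $\Image C\subseteq W$, while $P$ and $Q$ have range exactly $W$ and kernel exactly $W^{\perp}$. Writing all three matrices in block form along $\R^{n}=W\oplus W^{\perp}$, each is supported on the $W$-block, so $C\preceq P$ and $C\preceq Q$ are equivalent to the corresponding inequalities between the compressions $C|_{W},P|_{W},Q|_{W}$, with $P|_{W}$ and $Q|_{W}$ now positive definite on $W$. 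The whole problem has thus been moved to a nondegenerate situation on $W$.

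On $W$ I would diagonalize simultaneously by congruence: choose invertible $T$ on $W$ with $T^{\top}P|_{W}T=I$ and $T^{\top}Q|_{W}T=\operatorname{diag}(d_{1},\dots,d_{m})$, all $d_{i}>0$. Congruence preserves the L\"owner order, the positive semidefinite cone, and maximality, so maximal lower bounds of $\{P|_{W},Q|_{W}\}$ correspond bijectively to those of $\{I,D\}$ with $D=\operatorname{diag}(d_{i})$. Non-comparability of $P$ and $Q$ is equivalent, via Sylvester's law, to $\operatorname{diag}(1-d_{i})$ being indefinite, i.e. some $d_{i}<1$ and some $d_{j}>1$. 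Restricting to that $(i,j)$-plane (and taking the forced value $\min(1,d_{k})$ on the remaining, comparable coordinates), I would produce two distinct maximal lower bounds of $\{I,D\}$: the coordinatewise minimum, and a second, rank-deficient one obtained by ``rotating'' in the indefinite plane so that both slack matrices $I-C$ and $D-C$ become singular with complementary kernels. Maximality in each case follows from the criterion that $C$ is maximal as soon as $\Ker(I-C)+\Ker(D-C)$ is the whole space, since then any admissible increment $\Delta\succeq 0$ must vanish on both kernels. Pulling these back through $T$ and the inclusion $W\hookrightarrow\R^{n}$ yields two distinct positive semidefinite maximal lower bounds of $A$ and $B$.

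The main obstacle is twofold. Conceptually, the heart is the range identity $\Image P=\Image Q=\Image A\cap\Image B$: without it, non-comparability of the shorts could occur ``spuriously'' in directions outside the common range, where it would force no multiplicity — this lemma is exactly what guarantees that incomparable shorts land in the nondegenerate regime. Technically, the crux is the explicit construction of the second, rotated maximal lower bound in the indefinite $2\times 2$ block and the verification that it is genuinely maximal; this is precisely where the indefinite-orthogonal structure enters, being the finite-dimensional shadow of the orbit $O(p,q)/(O(p)\times O(q))$ announced in the abstract. One could alternatively close the endgame by invoking Kadison's theorem (Theorem~\ref{thm:kadison}) on $W$ after converting ``unique maximal lower bound'' into ``infimum exists'' through a compactness and Zorn argument, but one must then reconcile the positive semidefinite infimum with the unrestricted one, a subtlety that the direct construction avoids.
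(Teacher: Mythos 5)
Your proposal is correct, but it follows a genuinely different route from the paper's. The paper obtains Theorem~\ref{thm:gudder} as a consequence of Corollary~\ref{cor:homeo_pos}: uniqueness of a positive semidefinite maximal lower bound forces the parameter space $\R^{p'q'}$ to be a singleton, hence $p'q'=0$, i.e.\ the shorts are comparable. Corollary~\ref{cor:homeo_pos} is itself proved by an explicit congruence reducing to $A=I_a\oplus S_A\oplus 0_b$, $B=0_a\oplus S_B\oplus I_b$, followed by the full $\Oc(p,q)$-parametrization of Theorem~\ref{thm:main_theo} and the order-reversing bijection $X\mapsto X^{-1}$ on positive definite matrices. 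You instead prove the contrapositive directly: you reduce to the common range through the universal property of the shorts --- observing that the positive semidefinite lower bounds of $\{A,B\}$ and of $\{[A]B,[B]A\}$ coincide (a clean observation the paper never isolates) and that $\Image ([A]B)=\Image ([B]A)=\Image A\cap\Image B$ --- and then, after simultaneous diagonalization on that common range, you exhibit two distinct maximal lower bounds of $\{I,D\}$ by hand, using only the kernel-sum maximality criterion, i.e.\ the elementary equivalence (i)$\iff$(ii) of Theorem~\ref{thm:main_theo}, rather than the polar-decomposition parametrization. The paper's route buys a much stronger conclusion (the homeomorphism with $\R^{p'q'}$ and the rank bound); yours buys economy, avoiding both the $\Oc(p,q)$ machinery and the inversion trick, at the price of invoking the Anderson--Trapp range identity for shorted operators. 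The one detail you should nail down is that your second solution is actually positive semidefinite: the cleanest fix is to note that the coordinatewise minimum is positive definite on the common range and that the one-parameter family of maximal lower bounds in the indefinite $2\times 2$ plane is continuous, so any sufficiently small ``rotation'' stays in the open cone; your rank-deficient boundary choice also exists, but its semidefiniteness requires an extra compactness argument rather than being automatic.
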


The aforementionned theorems raise the issue of characterizing the whole set
of maximal lower bounds of two symmetric matrices $A$ and $B$. Our first main result (Theorem~\ref{thm:main_theo}) shows that this set
can be identified to the quotient space
\begin{align*}
O(p,q)/(O(p)\times O(q))
\end{align*}
where $(p,q)$ denote the inertia of $A-B$, $O(p)$ denotes the
$p$-th orthogonal group, 
and $O(p,q)$ is the indefinite orthogonal group arising from a quadratic form with inertia $(p,q)$. 
It follows that the set of maximal lower bounds is of dimension $pq$.


When $p+q=n$, the dimension of the set of maximal lower bounds is $p(n-p)$ which coincides with the dimension of the Grassmannian $\Gr(n,p)$.
This suggests to look for a parametrization of maximal lower bounds by $p$-dimensionnal subspaces of $\R^n$.
Our second main result (Theorem~\ref{thm:sec_theo}) leads to such a parametrization.
We study more generally the following problem:
given subspaces $\Uc$ and $\Vc$, parametrize the set of maximal lower bounds $C$ of two symmetric matrices $A$ and $B$ that satisfy $\Uc \subseteq \Ker (B-C)$ and $\Vc \subseteq \Ker (A-C)$. We give geometric conditions for the existence of solutions in terms of $\Uc, \Vc$ and the indefinite quadratic form $A-B$, and, if these conditions are met, a parametrization of the set of solutions showing that this set is of dimension $(p-\dim \Uc)(q-\dim \Vc)$.

These results have a geometric consequence that will be dealt with in Section~\ref{sec:appli}.
When specialized to positive definite forms, 
the L\"owner order corresponds to the inclusion order of ellipsoids, up to a reversal. 
We deduce from Theorem~\ref{thm:main_theo} that given 
an ellipsoid $\Ec_C$ minimally enclosing two ellipsoids $\Ec_A, \Ec_B$,
the set of tangency points of $\Ec_C$ with $\Ec_A$ (resp. $\Ec_B$) spans
the kernel of $A-C$ (resp. $B-C$). Moreover, the sum of these kernels must span the whole space. 

Although the present results are stated for real quadratic forms, they carry over to hermitian forms, up to immediate changes.

Let us finally point out some applied motivations of the present work. 
The question of selecting a minimal ellipsoid containing two given ellipoids, or a maximal ellipsoid contained in their intersection, appears in a number of applied fields, 
including optimization~\cite{Ben-Tal:2001:LMC:502969}, control~\cite{Lawson2006}, reachability analysis of dynamical systems~\cite{elltoolbox}, program verification~\cite{AGGPS}, information geometry and mathematical morphology~\cite{angulo,Bur05}. 
In most of the applications dealt with there, an important issue is to select effectively a remarkable minimal upper bound or maximal lower bound.
 For instance, selections arising from minimum or maximal volume considerations (L\"owner's ellipsoids~\cite{ball}) are frequently used. 
We expect the present characterization of the set of all maximal lower bounds 
to lead to more flexibility in 
some of these applications.

\section{Notation}

In the sequel, $\Mc_{p,q}$ denotes the set of $p \times q$ (real) matrices, $\Sc_n$ denotes the set of $n \times n$ symmetric matrices and $A^T$ denotes the transpose of a matrix $A$.
The kernel of $A$ is denoted by $\Ker A$, its range by $\Image A$ and its rank by $\rk A$. 
We denote the orthogonal complement of a subspace $V$ with respect to the standard scalar product by $V^\perp$.
The group of $n \times n$ invertible matrices is denoted by $\GL_n$.
If $A\in \Sc_n$, the {\em inertia}
of $A$ is the triple $(p,q,r)$, where $p$ (resp.\ $q,r$) is the number of positive (resp.\ negative, zero) eigenvalues of $A$, counted with multiplicities.
Given two square matrices $A,B$, the \emph{direct sum} of those matrices, denoted $A \oplus B$, is the block diagonal matrix with blocks $A$ and $B$:
\begin{align*}
A \oplus B = 
\begin{pmatrix}
A & \\
& B
\end{pmatrix} \,.
\end{align*}
We denote by $J_{p,q,r}$ the canonical bilinear form 
of inertia $(p,q,r)$ on $\R^{p+q+r}$. It is defined by 
\begin{align*}
J_{p,q,r} (x,y)= \sum_{i=1}^{p} x_iy_i - \sum_{i=p+1}^{p+q} x_iy_i \,,
\end{align*}
and the corresponding matrix in the canonical basis of $\R^n$ is $I_p \oplus (-I_q) \oplus 0_r$, where $I_n$ (resp. $0_n$) denote the identity matrix (resp. zero matrix) of size $n \times n$.
When $r = 0$, we use the notation $J_{p,q}$ and we denote by $\Oc(p,q)$ the associated (indefinite) orthogonal group of square matrices $S$ such that $S J_{p,q} S^T = J_{p,q}$.
When $q=r=0$, $\Oc(p,q)$ becomes the standard orthogonal group $\Oc(p)$.

The set of symmetric matrices is endowed with the \textit{Löwner order} $\preceq$, which is such that 
\begin{align*}
A \preceq B \iff   \forall x \in \R^n,\; x^T Ax \leq x^TBx  \enspace .
\end{align*}
We write $A \prec B$ when $x^T Ax < x^TBx$ holds for all $x\neq 0$. 
The set of positive semidefinite matrices, denoted $\Sc_n^+$, is the set of matrices $A$ such that $A \succeq 0$.
We say that the matrix $M$ is positive definite (resp. negative definite) over a subspace $\Vc$ if $x^TMx > 0$ (resp. $x^TMx < 0$) holds for all nonzero vectors $x$ in $\Vc$.
Given a positive semidefinite matrix $M$, the square root of the matrix $M$ is the unique positive semidefinite matrix, denoted $M^{1/2}$, such that $M^{1/2}M^{1/2} = M$.

\section{Parametrization of the set of maximal lower bounds of two symmetric matrices}

\subsection{Statement of the main theorem}

Our first main result
is a parametrization of
 the set of maximal lower bounds of two symmetric matrices with respect to the L\"owner order. It implies that this set 
is of dimension $pq$ and that it can be identified with 
$\Oc(p,q)/ \big(\Oc(p) \times \Oc(q) \big)$,
the quotient set of the indefinite orthogonal group $\Oc(p,q)$ by the maximal compact subgroup $\Oc(p) \times \Oc(q)$.
\begin{theorem}
\label{thm:main_theo}
Let $A,B,C\in \Sc_n$ be such that $C \preceq A$ and $C \preceq B$,
and let $(p,q,r)$ denote the inertia of $A-B$. 
The following statements are equivalent:
\begin{enumerate}[(i)]
\item C is a maximal lower bound of $A$ and $B$
\item\label{it:2} $\Ker ( A-C ) + \Ker ( B-C ) = \R^n$
\item\label{it:3} $\rk ( A-C ) = p$ and $\rk ( B-C ) = q$
\item\label{it:4}For all $P \in \GL_n$ revealing the inertia of $A-B$, i.e.
such that $A-B = P J_{p,q,r}P^T$, there exists a unique $M \in \Mc_{p,q}$ such that:
\begin{align*}
	C & = A - PS 
	( I_p \oplus 0_{q} \oplus 0_{r} )	
		S P^T
\quad \text{with}
	\\
	S & =
		\begin{pmatrix}
				( I_p + MM^T ) ^ {1/2} & M \\
				M^T & ( I_q + M^TM ) ^ {1/2}
		\end{pmatrix} \oplus 0_r \enspace .
\end{align*}
\end{enumerate}
\end{theorem}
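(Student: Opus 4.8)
The plan is to set $X = A-C$ and $Y = B-C$; by hypothesis $X,Y \succeq 0$ and $X-Y = A-B$ has inertia $(p,q,r)$. Observe that $C$ is a maximal lower bound of $A$ and $B$ if and only if there is no $D \succeq 0$, $D \neq 0$, with $D \preceq X$ and $D \preceq Y$, since such a $D$ is exactly what would make $C+D$ a strictly larger common lower bound. The whole statement thus becomes a statement about pairs of positive semidefinite matrices with prescribed difference, which I would analyze after congruence to the canonical form $J_{p,q,r}$.

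For (i) $\Leftrightarrow$ (ii) the key remark is that $C$ is maximal if and only if $\Image X \cap \Image Y = \{0\}$. If $v \in \Image X \cap \Image Y$ is nonzero, then $D = \varepsilon v v^{T} \preceq X,Y$ for small $\varepsilon > 0$ (each of $X,Y$ being positive definite on its own range), contradicting maximality; conversely, if $D \succeq 0$ satisfies $D \preceq X$ and $D \preceq Y$, then $\Ker X \subseteq \Ker D$ and $\Ker Y \subseteq \Ker D$, so $\Ker X + \Ker Y \subseteq \Ker D$, and $\Image X \cap \Image Y = (\Ker X + \Ker Y)^{\perp} = \{0\}$ forces $D = 0$. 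Since $\Image X \cap \Image Y = (\Ker X + \Ker Y)^{\perp}$, this is precisely condition (ii). To reach (iii), I would first record the unconditional bounds $\rk X \geq p$ and $\rk Y \geq q$: on a $p$-dimensional subspace where $A-B$ is positive definite one has $x^{T}Xx \geq x^{T}(A-B)x > 0$, and symmetrically for $Y$. Maximality further forces $\Ker(A-B) = \Ker X \cap \Ker Y$, the inclusion $\subseteq$ holding because for $v \in \Ker(A-B)$ one has $Xv = Yv$, a vector which, if nonzero, would lie in $\Image X \cap \Image Y$; hence $\dim(\Ker X \cap \Ker Y) = r$. Feeding this into (ii) via $\dim\Ker X + \dim\Ker Y = n+r$ together with $\dim\Ker X \leq q+r$ and $\dim\Ker Y \leq p+r$ forces equality throughout, i.e. $\rk X = p$ and $\rk Y = q$; the reverse implication (iii) $\Rightarrow$ (ii) is a direct dimension count using $\dim(\Ker X \cap \Ker Y) \leq r$.

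The substantial part is (iii) $\Rightarrow$ (iv). Fixing $P$ with $A-B = PJ_{p,q,r}P^{T}$, I pass to $\hat X = P^{-1}XP^{-T}$ and $\hat Y = P^{-1}YP^{-T}$, so that $\hat X - \hat Y = J_{p,q,r}$ with $\hat X,\hat Y \succeq 0$. Since $\Ker(A-B) \subseteq \Ker X \cap \Ker Y$, the last $r$ coordinates lie in $\Ker\hat X \cap \Ker\hat Y$, so both matrices collapse to their leading $(p+q)\times(p+q)$ blocks and it suffices to treat $J = I_p \oplus (-I_q)$. Writing the leading block of $\hat X$ as $\left(\begin{smallmatrix} X_{11} & X_{12}\\ X_{12}^{T} & X_{22}\end{smallmatrix}\right)$, positivity of $\hat Y$ gives $X_{11} \succeq I_p$ (hence $X_{11}$ invertible), and the rank conditions $\rk\hat X = p$, $\rk\hat Y = q$ translate, through Schur complements with respect to the invertible blocks $X_{11}$ and $X_{22}+I_q$, into $X_{22} = X_{12}^{T}X_{11}^{-1}X_{12}$ and $X_{11}-I_p = X_{12}(X_{22}+I_q)^{-1}X_{12}^{T}$.

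At this point I set $M = X_{11}^{-1/2}X_{12}$, which is the only candidate compatible with the prescribed off-diagonal block $(I_p+MM^{T})^{1/2}M = X_{11}^{1/2}M$, giving uniqueness for free. The first Schur relation immediately yields $X_{22} = M^{T}M$, and after applying the push-through identity $M(I_q+M^{T}M)^{-1}M^{T} = MM^{T}(I_p+MM^{T})^{-1}$ the second relation collapses, after conjugating by $X_{11}^{-1/2}$, to $I_p - X_{11}^{-1} = I_p - (I_p+MM^{T})^{-1}$, whence $X_{11} = I_p + MM^{T}$. Together with the identity $M(I_q+M^{T}M)^{1/2} = (I_p+MM^{T})^{1/2}M$ (clear from the singular value decomposition of $M$), this shows that $\hat X$ equals the announced block matrix $S_0\Pi_0 S_0^{T}$, so the formula of (iv) holds for this unique $M$. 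Conversely, (iv) $\Rightarrow$ (iii) is a direct computation: the displayed $X$ factors through a matrix of rank $p$, and $Y = X - (A-B)$ through one of rank $q$, which closes the cycle of equivalences. I expect the extraction and verification of the closed form for $M$ from the two Schur relations to be the main obstacle, the remaining steps being either soft (the range/kernel duality) or routine dimension counting.
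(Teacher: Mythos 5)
Your proposal is correct. For (i) $\Leftrightarrow$ (ii) $\Leftrightarrow$ (iii) it is essentially the paper's argument in different clothing: your rank-one matrix $\varepsilon vv^T$ with $v\in\Image(A-C)\cap\Image(B-C)$ is the paper's perturbation $C+\epsilon uu^T$ with $u\in\Ker(A-C)^\perp\cap\Ker(B-C)^\perp$ (the same subspace), your reformulation of maximality via a ``gap'' $D\succeq 0$ matches the paper's contrapositive with $D=C^*-C$, and the dimension counts coincide. Where you genuinely diverge is the parametrization (iv). The paper proves (ii) $\Rightarrow$ (iv) by passing to a basis adapted to $K_B\oplus K_A\oplus\Ker(A-B)$, in which $A-C$ and $B-C$ become $M_p\oplus 0_q\oplus 0_r$ and $0_p\oplus M_q\oplus 0_r$ with $M_p,M_q$ positive definite; the factor $M_p^{1/2}\oplus M_q^{1/2}$ is recognized as an element of $\Oc(p,q)$, and existence and uniqueness of $M$ are delegated to the polar decomposition of $\Oc(p,q)$ (Lemma~\ref{thm:gallier}), together with a check that the residual $\Oc(p)\times\Oc(q)$ factor cancels in the formula for $C$. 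You instead work with the blocks $X_{11},X_{12},X_{22}$ of $\hat{X}=P^{-1}(A-C)P^{-T}$: positive semidefiniteness of $\hat{X}$ and of $\hat{Y}=\hat{X}-J_{p,q}$ makes $X_{11}$ and $X_{22}+I_q$ invertible, the two rank conditions become two vanishing Schur complements, and setting $M:=X_{11}^{-1/2}X_{12}$ solves them via the push-through identity, forcing $X_{11}=I_p+MM^T$, $X_{12}=(I_p+MM^T)^{1/2}M$, $X_{22}=M^TM$; uniqueness is automatic because any $M$ realizing (iv) must equal $X_{11}^{-1/2}X_{12}$. Your route is more elementary (Lemma~\ref{thm:gallier} is never invoked) and it yields, as a by-product, the explicit continuous inversion $M=X_{11}^{-1/2}X_{12}$, which is exactly the map the paper has to construct separately at the end of its proof of Corollary~\ref{cor:homeo}; what it does not exhibit is the $\Oc(p,q)$-orbit structure that the paper's route makes transparent and then reuses to identify the set of maximal lower bounds with $\Oc(p,q)/(\Oc(p)\times\Oc(q))$. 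Two points should be written out in full if you formalize the sketch: the reduction to $r=0$ rests on $\Ker(A-B)=\Ker(A-C)\cap\Ker(B-C)$, which you may use only because (i) $\Leftrightarrow$ (ii) $\Leftrightarrow$ (iii) has already been established (or, alternatively, by a dimension count from (iii) alone); and the closing step (iv) $\Rightarrow$ (iii) needs the identity $S(I_p\oplus 0_q\oplus 0_r)S-J_{p,q,r}=S(0_p\oplus I_q\oplus 0_r)S$, which is precisely where the intertwining relation $M(I_q+M^TM)^{1/2}=(I_p+MM^T)^{1/2}M$ gets used.
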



\begin{remark}
Assertion~\eqref{it:4} can also be rewritten in terms of the matrix $B$:
\begin{align*}
C = B - PS (0_p \oplus I_q \oplus 0_r )
		S P^T
\end{align*}
\end{remark}

\begin{remark}
A similar theorem holds for minimal upper bounds, in which case~\eqref{it:2} is unchanged, while~\eqref{it:3} and~\eqref{it:4} read :
\begin{enumerate}[(i)]
	\setcounter{enumi}{2}
	\item $\rk ( A-C ) = q$ and $\rk  (B-C ) = p$,
	\item $C = A + PS
	 (0_p \oplus I_q \oplus 0_r )
	SP^T
	=
	B +
	PS
	 (I_p \oplus 0_{q} \oplus 0_{r} )
	S P^T$
\end{enumerate}
\end{remark}

Before proving Theorem~\ref{thm:main_theo}, we
draw two corollaries. Theorem~\ref{thm:main_theo}, Corollary~\ref{cor:homeo} and Corollary~\ref{cor:homeo_pos} are proved in Section~\ref{sec:proof_main_theo}.

\begin{corollary}\label{cor:homeo}
Let $A,B \in \Sc_n$, and let $(p,q,r)$ denote the inertia of $A-B$.
Then, the set of maximal lower bounds of $A$ and $B$ is homeomorphic to
the quotient set 
\begin{align*}
\bigslant{\Oc(p,q)}{\big(\Oc(p) \times \Oc(q)\big)} \cong \R^{pq} \enspace .
\end{align*}
\end{corollary}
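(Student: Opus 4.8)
The plan is to build on Theorem~\ref{thm:main_theo}, whose equivalence (i)$\iff$(iv) already provides, for each fixed $P$ revealing the inertia of $A-B$, a bijection between the set of maximal lower bounds and the set of matrices $M\in\Mc_{p,q}$. Since $\Mc_{p,q}\cong\R^{pq}$ as a topological space, the second homeomorphism $\bigslant{\Oc(p,q)}{(\Oc(p)\times\Oc(q))}\cong\R^{pq}$ reduces to a statement about the indefinite orthogonal group, and the first homeomorphism requires upgrading the set-theoretic bijection of Theorem~\ref{thm:main_theo}(iv) to a topological one. So there are really two things to prove: that the parametrization $M\mapsto C$ is a homeomorphism onto the set of maximal lower bounds, and that the quotient $\Oc(p,q)/(\Oc(p)\times\Oc(q))$ is itself homeomorphic to $\R^{pq}$.

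First I would treat the map $\Phi\colon M\mapsto C = A - PS(I_p\oplus 0_q\oplus 0_r)S^TP^T$ with $S$ as in~\eqref{it:4}. By Theorem~\ref{thm:main_theo} this is a bijection from $\Mc_{p,q}$ onto the set of maximal lower bounds, which I topologize as a subspace of $\Sc_n$ (with its usual norm topology). The formula defining $\Phi$ is manifestly continuous in $M$, since the matrix square roots $(I_p+MM^T)^{1/2}$ and $(I_q+M^TM)^{1/2}$ are continuous functions of $M$ — the matrices $I_p+MM^T$ and $I_q+M^TM$ are positive definite, and the square-root map is continuous (indeed analytic) on the positive definite cone. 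For the inverse, I would argue that $\Phi$ is a continuous injection whose inverse is also continuous; concretely, one can recover $M$ continuously from $C$ by reading off the relevant block of $P^{-1}(A-C)(P^T)^{-1}$ and inverting the explicit algebraic relations, so that $\Phi$ is a homeomorphism onto its image.

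Next I would establish the homeomorphism $\bigslant{\Oc(p,q)}{(\Oc(p)\times\Oc(q))}\cong\R^{pq}$. The cleanest route is the polar/Cartan decomposition of the indefinite orthogonal group: every $S\in\Oc(p,q)$ factors uniquely as $S=KP$ (or the generalized Cartan form), where $K\in\Oc(p)\times\Oc(q)$ is the maximal compact subgroup and the remaining factor lives on a contractible set diffeomorphic to $\R^{pq}$. The matrices $S$ appearing in Theorem~\ref{thm:main_theo}(iv), namely
\begin{align*}
S=\begin{pmatrix}(I_p+MM^T)^{1/2} & M\\ M^T & (I_q+M^TM)^{1/2}\end{pmatrix},
\end{align*}
are precisely a set of coset representatives for $\Oc(p,q)/(\Oc(p)\times\Oc(q))$ parametrized by $M\in\Mc_{p,q}$: one checks $SJ_{p,q}S^T=J_{p,q}$ so $S\in\Oc(p,q)$, and that distinct $M$ give distinct cosets while every coset contains such a representative. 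This exhibits the quotient as the image of the continuous, bijective, and bicontinuous map $M\mapsto S(\Oc(p)\times\Oc(q))$, hence as $\R^{pq}$, so that the two homeomorphisms compose to give the corollary.

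The main obstacle I anticipate is the continuity of the inverse of $\Phi$ together with the bicontinuity of the quotient identification. Both bijections are furnished for free by Theorem~\ref{thm:main_theo}, but a bijection between topological spaces need not be a homeomorphism, so the real work is checking openness (equivalently, continuity of the set-theoretic inverse). For $\Phi$ this means verifying that $M$ depends continuously on $C$, which I expect to follow from the explicit block-extraction recipe above; for the quotient it amounts to showing that the canonical projection $\Oc(p,q)\to\Oc(p,q)/(\Oc(p)\times\Oc(q))$ restricts to a homeomorphism on the slice of representatives $S(M)$, which follows from uniqueness of the Cartan-type decomposition and the fact that $\Oc(p)\times\Oc(q)$ is compact (so the action is proper and the quotient is Hausdorff). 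Assembling these continuity arguments carefully is where the bulk of the proof will lie.
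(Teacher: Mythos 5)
Your proposal is correct and follows essentially the same route as the paper: the bijection from Theorem~\ref{thm:main_theo}\eqref{it:4}, the polar decomposition of Lemma~\ref{thm:gallier} to identify the quotient with $\Mc_{p,q}\cong\R^{pq}$, continuity of the explicit parametrization, and continuity of the inverse via block extraction. Your ``block-extraction recipe'' is exactly the paper's concrete formula: the top-left and top-right blocks of $P^{-1}(A-C)P^{-T}$ are $A(M)=I_p+MM^T$ and $B(M)=(I_p+MM^T)^{1/2}M$, whence $M=A(M)^{-1/2}B(M)$ depends continuously on $C$ since $A(M)\succeq I_p$.
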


\begin{corollary}\label{cor:homeo_pos}
Let $A,B \in \Sc_n^+$, and let $(p',q',r')$ denote the inertia of $B[A] - A[B]$. 
The rank of a positive semidefinite maximal lower bound of $A,B$ cannot exceed $p'+q'+\dim \Ker (A-B)$. Moreover,
 the set of \emph{positive semidefinite} maximal lower bounds of $A$ and $B$ which have this rank is homeomorphic to
the quotient set 
\begin{align*}
\bigslant{\Oc(p',q')}{\big(\Oc(p') \times \Oc(q')\big)} \cong \R^{p'q'} \enspace .
\end{align*}
\end{corollary}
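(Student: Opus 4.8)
The plan is to reduce Corollary~\ref{cor:homeo_pos} to the non-positive, unconstrained case handled by Theorem~\ref{thm:main_theo} and Corollary~\ref{cor:homeo}. The starting observation is that a positive semidefinite maximal lower bound $C$ of $A$ and $B$ must in particular satisfy $0 \preceq C \preceq A$ and $0 \preceq C \preceq B$, so $C$ is a lower bound of the three matrices $A$, $B$, and $0$. I would therefore first argue that maximizing $C$ among positive semidefinite lower bounds amounts to studying maximal lower bounds of the triple $\{A,B,0\}$, and that the generalized shorts $A[B] = [A]B$ and $B[A] = [B]A$ encode exactly the obstruction coming from the constraint $C \succeq 0$. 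Concretely, I expect $A[B]$ to be the largest positive semidefinite matrix below $B$ whose range lies in $\Image A$, so that a positive semidefinite $C$ below both $A$ and $B$ is automatically below both shorts; the difference $B[A]-A[B]$ should then play the role that $A-B$ played in the unconstrained theorem.

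The key technical step is to show that the rank bound $\rk C \le p'+q'+\dim\Ker(A-B)$ holds, and that maximal-rank positive semidefinite lower bounds are exactly the maximal lower bounds of a suitably restricted pair of symmetric matrices. First I would decompose $\R^n$ according to $\Ker(A-B)$ and its complement: on $\Ker(A-B)$ the two forms agree, so there the constraint is inert and contributes the term $\dim\Ker(A-B)$ to the rank. On the complementary part I would replace $A,B$ by the shorts $A[B]$ and $B[A]$, whose difference has inertia $(p',q',r')$ by hypothesis, and apply Theorem~\ref{thm:main_theo}\eqref{it:3}: a maximal lower bound $C'$ there satisfies $\rk(A[B]-C')=p'$ and $\rk(B[A]-C')=q'$. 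Translating these rank conditions back through the short operation should yield the stated rank $p'+q'+\dim\Ker(A-B)$ and identify the maximal-rank positive semidefinite lower bounds with the maximal lower bounds of the pair $(A[B],B[A])$.

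Once this identification is in place, the homeomorphism follows immediately from Corollary~\ref{cor:homeo} applied to the pair $(A[B], B[A])$, whose difference has inertia $(p',q',r')$: the set of its maximal lower bounds is homeomorphic to $\Oc(p',q')/(\Oc(p')\times\Oc(q')) \cong \R^{p'q'}$. I would then check that the correspondence $C \leftrightarrow C'$ is bicontinuous, which should be routine since the short operation and the block decomposition are continuous and the constraint $C\succeq 0$ cuts out a closed subset compatibly with the parametrization.

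The main obstacle I anticipate is establishing that the positive semidefiniteness constraint interacts with maximality exactly through the shorts, i.e.\ proving rigorously that a positive semidefinite $C$ is a maximal lower bound of $A,B$ of maximal rank \emph{if and only if} it is a maximal lower bound of the pair $(A[B],B[A])$ on the relevant subspace. This requires the precise variational characterization of the generalized short $[Y]X$ as the maximum of $Z$ with $0\preceq Z\preceq X$ and $\Image Z\subseteq\Image Y$, together with a careful verification that no rank can be gained by leaving the range constraints dictated by the shorts. Handling the degenerate interaction on $\Ker(A-B)$, and making sure the ranks add up correctly without double counting, is where the argument is most delicate.
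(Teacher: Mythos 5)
Your reduction has a genuine gap at its central step: the claimed identification of maximal-rank positive semidefinite maximal lower bounds of $A,B$ with the \emph{unconstrained} maximal lower bounds of the pair $\big([A]B,[B]A\big)$ is false, and it fails precisely in the direction you need. What is true (and easy to check) is that the positive semidefinite lower bounds of $A,B$ coincide with the positive semidefinite lower bounds of the shorts: $0\preceq C\preceq A$ forces $\Image C\subseteq \Image A$, hence $C\preceq [A]B$, and symmetrically $C\preceq [B]A$; from this one even gets that every psd maximal lower bound of $A,B$ is a maximal lower bound of the pair of shorts. But the converse inclusion fails: a maximal lower bound of $\big([A]B,[B]A\big)$ need not be positive semidefinite. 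The simplest counterexample is $A,B$ positive definite and non-comparable, where $[A]B=B$ and $[B]A=A$, so your identification would assert that \emph{every} maximal lower bound of $A,B$ is psd; the paper's own two-dimensional example $A=2\oplus 1$, $B=1\oplus 2$ refutes this, since the maximal lower bounds $C_\theta$ have diagonal entry $2-\ch^2\theta<0$ for large $\theta$. Consequently, applying Corollary~\ref{cor:homeo} to the pair of shorts only shows that the set you care about embeds as a \emph{proper} subset of a set homeomorphic to $\R^{p'q'}$; it does not give the homeomorphism. The root of the problem is that the constraint $C\succeq 0$ is an upper-bound constraint (on the matrix $0$), and passing to the shorts does not absorb it — this is also why your opening remark that $C$ is "a lower bound of the triple $\{A,B,0\}$" is incorrect.

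The missing idea, which is the crux of the paper's argument, is an order anti-isomorphism that converts the constrained problem into an unconstrained one: for positive definite $A,B$, the map $X\mapsto X^{-1}$ is monotone decreasing on the positive definite cone, hence carries the positive definite maximal lower bounds of $A,B$ bijectively onto the minimal upper bounds of $A^{-1},B^{-1}$ — a problem with no hidden positivity constraint, since any upper bound of $A^{-1}$ is automatically positive definite. Together with the observation that $B^{-1}-A^{-1}$ has the same inertia as $A-B$, the minimal-upper-bound version of Corollary~\ref{cor:homeo} then yields the homeomorphism with $\R^{p'q'}$. The remainder of the paper's proof is the reduction you were aiming for: assuming $\Ker(A-B)=\{0\}$, decompose $\R^n=R_A\oplus R_{A,B}\oplus R_B$ with $R_{A,B}=\Image A\cap\Image B$, normalize $A=I_a\oplus S_A\oplus 0_b$ and $B=0_a\oplus S_B\oplus I_b$ (so the shorts are $0_a\oplus S_A\oplus 0_b$ and $0_a\oplus S_B\oplus 0_b$), and use the explicit parametrization of Theorem~\ref{thm:main_theo} to show that positive semidefiniteness forces $C=0_a\oplus S_C\oplus 0_b$ with $S_C$ a psd maximal lower bound of the positive definite pair $(S_A,S_B)$. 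So your shorts-based reduction and the rank bound are right in spirit, but after the reduction the positivity constraint is still present, and discharging it requires the inversion trick that your proposal lacks.
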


We note that Kadison's result can be recovered as a special case of Corollary~\ref{cor:homeo}.
Indeed, the existence of greatest lower bound of two matrices
$A,B$ is equivalent to the existence
of a unique maximal lower bound of these matrices, which, by Corollary~\ref{cor:homeo}, cannot happen unless $pq=0$, meaning that $A\preceq B$ or $B\preceq A$.

The result from Moreland and Gudder can be recovered from Corollary~\ref{cor:homeo_pos} in the same way. If two positive semidefinite matrices $A,B$ have a unique positive semidefinite maximal lower bound $C$, then the uniqueness implies that $p'q' = 0$, which means that $[B]A \preceq [A]B$ or $[A]B \preceq [B]A$.

\subsection{Preliminary lemmas}

We present two results which will be useful in the proof of Theorem~\ref{thm:main_theo}.
\begin{lemma}
\label{intersectionKernelLemma}
Let $P,Q \in \Sc_n$. 
We have
\begin{align*}
\Ker P + \Ker Q = \R^n \implies \Ker P \cap \Ker Q = \Ker ( P - Q )
\end{align*}
\end{lemma}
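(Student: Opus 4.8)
The plan is to prove the two inclusions separately, and to observe that only the reverse inclusion requires the hypothesis. The inclusion $\Ker P \cap \Ker Q \subseteq \Ker(P-Q)$ is immediate and holds for arbitrary matrices: if $Px=0$ and $Qx=0$ then $(P-Q)x=0$. So the entire content of the lemma lies in proving $\Ker(P-Q) \subseteq \Ker P \cap \Ker Q$ under the assumption $\Ker P + \Ker Q = \R^n$.

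The key idea is to translate the hypothesis, which is a statement about kernels, into a dual statement about ranges, using the symmetry of $P$ and $Q$. Since $P,Q \in \Sc_n$, we have $\Image P = (\Ker P)^\perp$ and $\Image Q = (\Ker Q)^\perp$. Taking orthogonal complements in the hypothesis and using the identity $(\Uc + \Vc)^\perp = \Uc^\perp \cap \Vc^\perp$ gives
\begin{align*}
\{0\} = (\R^n)^\perp = \big(\Ker P + \Ker Q\big)^\perp = (\Ker P)^\perp \cap (\Ker Q)^\perp = \Image P \cap \Image Q \enspace .
\end{align*}
Thus the hypothesis is equivalent to $\Image P \cap \Image Q = \{0\}$.

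With this reformulation the converse inclusion is immediate. Let $x \in \Ker(P-Q)$, so that $Px = Qx$, and set $y := Px = Qx$. Then $y \in \Image P$ and simultaneously $y \in \Image Q$, whence $y \in \Image P \cap \Image Q = \{0\}$. Therefore $Px = Qx = 0$, i.e. $x \in \Ker P \cap \Ker Q$, which is exactly the inclusion we wanted.

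I expect the only real step to be the range/kernel duality: one must invoke that for a symmetric matrix the range is the orthogonal complement of the kernel, and pair it with the complementation rule for a sum of subspaces. Once that transformation is made, the argument is a one-line consequence of the fact that $Px=Qx$ forces their common value into $\Image P \cap \Image Q$. No eigenvalue or spectral considerations are needed, and symmetry is used solely to guarantee $\Image = (\Ker)^\perp$; the statement indeed fails for general (non-symmetric) matrices, which is a useful sanity check that the symmetry hypothesis is doing genuine work.
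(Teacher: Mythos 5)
Your proof is correct and is essentially the same as the paper's: both rest on the observation that for $x \in \Ker(P-Q)$ the common value $Px = Qx$ lies in $\Image P \cap \Image Q$, which is trivial because (by symmetry) this intersection is the orthogonal complement of $\Ker P + \Ker Q = \R^n$. The only difference is presentational — the paper argues by contradiction while you argue directly, spelling out the duality step explicitly.
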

\begin{proof}
The inclusion $\Ker P \cap \Ker Q \subseteq \Ker ( P - Q )$ is trivial. Let $x \in \Ker ( P-Q )$ and assume $x \notin \Ker P$. 
Then $Px = Qx \neq 0$, and so $\Image P \cap \Image Q \neq \{ 0 \}$. Taking the orthogonal complement contradicts $\Ker P + \Ker Q = \R^n$.
\end{proof}

\begin{lemma}[Polar decomposition of $\Oc(p,q)$, see~\cite{Gallier}]
\label{thm:gallier}
For every $S \in \Oc(p,q)$, there exists a unique triple $(M,U,V) \in \Mc_{p,q} \times \Oc(p) \times \Oc(q)$ such that:
\begin{align*}
S =
\begin{pmatrix}
	\big( I_p + MM^T \big)^{1/2} & M \\
	M^T & \big( I_q + M^TM \big)^{1/2}
\end{pmatrix}
\begin{pmatrix}
U & \\
& V
\end{pmatrix}
 \enspace .
\end{align*}
\end{lemma}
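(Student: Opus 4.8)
The plan is to recognize the asserted factorization as the polar decomposition of the Lie group $\Oc(p,q)$ relative to its maximal compact subgroup $\Oc(p)\times\Oc(q)$, and to carry it out by elementary matrix computations. Throughout, write $J=J_{p,q}$, so that $J^2=I_n$, $J=J^T$, and $S\in\Oc(p,q)$ if and only if $SJS^T=J$ (equivalently $S^TJS=J$). Denote by $H(M)$ the first matrix factor in the statement. The two preliminary observations are that $H(M)$ is symmetric positive definite and that both $H(M)$ and every block-diagonal $U\oplus V$ with $U\in\Oc(p)$, $V\in\Oc(q)$ belong to $\Oc(p,q)$. The membership $U\oplus V\in\Oc(p,q)$ and the positive definiteness of $H(M)$ are checked directly, the latter most transparently by diagonalizing $M$ through a singular value decomposition, which reduces $H(M)$ to $2\times 2$ blocks with positive eigenvalues $\sqrt{1+\sigma^2}\pm\sigma$. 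The membership $H(M)\in\Oc(p,q)$ reduces, after expanding $H(M)\,J\,H(M)$, to the intertwining identity $(I_p+MM^T)^{1/2}M=M(I_q+M^TM)^{1/2}$, which follows from $M(I_q+M^TM)=(I_p+MM^T)M$ by functional calculus.

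For existence, I would start from the classical polar decomposition of an invertible matrix: given $S\in\Oc(p,q)$, set $\Sigma=(SS^T)^{1/2}$ and $\Theta=\Sigma^{-1}S$, so that $\Sigma$ is symmetric positive definite, $\Theta$ is orthogonal, and $S=\Sigma\Theta$. The point is to show that this decomposition is compatible with the group, i.e.\ that $\Sigma\in\Oc(p,q)$ and $\Theta\in\Oc(p)\times\Oc(q)$. First, $SS^T\in\Oc(p,q)$, since $(SS^T)J(SS^T)=S(S^TJS)S^T=SJS^T=J$. The hard part will be to pass from $SS^T$ to its square root $\Sigma$: from $\Sigma^2=SS^T\in\Oc(p,q)$ positive definite one gets $J(SS^T)J=(SS^T)^{-1}$, and since conjugation by the orthogonal involution $J$ commutes with the positive definite square root, this yields $J\Sigma J=\Sigma^{-1}$, that is $\Sigma\in\Oc(p,q)$. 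Then $\Theta=\Sigma^{-1}S\in\Oc(p,q)$ is orthogonal, hence commutes with $J$; commuting with $J=I_p\oplus(-I_q)$ forces $\Theta$ to be block diagonal, $\Theta=U\oplus V$ with $U\in\Oc(p)$ and $V\in\Oc(q)$.

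It remains to identify $\Sigma$ with some $H(M)$ and to establish uniqueness. I would characterize the symmetric positive definite elements of $\Oc(p,q)$ by writing such a $\Sigma$ in blocks $\begin{pmatrix} A & B \\ B^T & D\end{pmatrix}$ and expanding $\Sigma J\Sigma=J$, which gives $A^2=I_p+BB^T$, $D^2=I_q+B^TB$, and $AB=BD$. Since the diagonal blocks $A$ and $D$ of a positive definite matrix are themselves positive definite, the first two relations force $A=(I_p+BB^T)^{1/2}$ and $D=(I_q+B^TB)^{1/2}$, while the third holds automatically by the intertwining identity; hence $\Sigma=H(M)$ with $M=B$. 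In particular $M$ is read off as the off-diagonal block of $\Sigma$, so it is uniquely determined. Uniqueness of the whole triple then follows from uniqueness of the classical polar decomposition: if $S=H(M_1)(U_1\oplus V_1)=H(M_2)(U_2\oplus V_2)$, then $SS^T=H(M_1)^2=H(M_2)^2$, so $H(M_1)=H(M_2)$ by uniqueness of the positive definite square root, whence $M_1=M_2$ and $U_1\oplus V_1=U_2\oplus V_2$, giving $U_1=U_2$ and $V_1=V_2$.
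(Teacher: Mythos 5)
Your proof is correct, but note that the paper does not prove this lemma at all: it imports it by citation from Gallier's notes, and its label (``see~\cite{Gallier}'') is the entire justification given. What you have written is therefore a self-contained derivation of a result the paper treats as external, and it follows what is essentially the standard (and Gallier's) route: start from the classical polar decomposition $S=\Sigma\Theta$ with $\Sigma=(SS^T)^{1/2}$, show $\Sigma\in\Oc(p,q)$ by passing the relation $J(SS^T)J=(SS^T)^{-1}$ through the positive definite square root, deduce that the orthogonal factor $\Theta$ commutes with $J$ and is hence block diagonal, and finally identify the symmetric positive definite elements of $\Oc(p,q)$ with the matrices $H(M)$ by expanding $\Sigma J\Sigma=J$ in blocks and using positive definiteness of the diagonal blocks. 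All the delicate points are handled correctly: the intertwining identity $(I_p+MM^T)^{1/2}M=M(I_q+M^TM)^{1/2}$ via functional calculus, the compatibility of conjugation by the involution $J$ with the square root, and the uniqueness argument through $SS^T=H(M)^2$ and uniqueness of the positive definite square root. The trade-off is the expected one: the paper's citation keeps its exposition short and defers to a reference where the same argument is carried out, while your version makes the statement self-contained at the cost of about a page of matrix computations; either is acceptable, and your text could serve as an appendix proof if one wanted the paper to stand alone.
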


\subsection{Proof of Theorem~\ref{thm:main_theo}, Corollary~\ref{cor:homeo} and Corollary~\ref{cor:homeo_pos}}
\label{sec:proof_main_theo}

We now prove Theorem~\ref{thm:main_theo}.
We shall prove
\begin{align*}
(i) \iff (ii) \iff (iii) \;\;\text{and}\;\; (ii) \iff (iv) \,.
\end{align*}

\subsubsection*{$\neg(i) \implies \neg(ii)$}~\\
If $C$ is not a maximal lower bound of $A$ and $B$, then there is some $C^* \in \Sc_n$ such that :
\begin{align*}
		C \precneqq C^* \,, \quad
		C^* \preceq A \,, \quad
		C^* \preceq B \,.
\end{align*}
Let $x\in \Ker (A-C)$. We have $0 \preceq A-C^* \preceq A-C$, thus $x^T(A-C^*)x = 0$ and by nonnegativity, $Ax = C^*x$. By assumption, $Ax = Cx$, so that $x \in \Ker (C-C^*)$. This shows that $\Ker (A-C) \subseteq \Ker (C-C^*)$. By symmetry, this is also true when changing $A$ into $B$. As a consequence of this inclusion and the assumption $C \neq C^*$, we have:
\begin{align*}
\Ker (A-C) + \Ker (B-C) \subseteq \Ker (C-C^*) \neq \R^n \,.
\end{align*}

\subsubsection*{$\neg(ii) \implies \neg(i)$}~\\
If the sum of kernels is not equal to $\R^n$, then the set $\Ker (A-C)^\perp \cap \Ker (B-C)^\perp$ is not $\{ 0 \}$. Let $u$ be a unit vector in $\Ker (A-C)^\perp \cap \Ker (B-C)^\perp$.
Let $z \in \R^n$. We write $z = x+y$, with $x \in \Ker A-C$ and $y \in  \Ker( A-C )^\perp$.
Then we have, for $\epsilon > 0$,
\begin{align*}
z^T (A-C - \epsilon uu^T)z = y^T(A-C-\epsilon uu^T ) y\,.
\end{align*}
The quadratic map $y \mapsto y^T(A-C)y$ is positive definite on $\Ker (A-C)^\perp$ as the matrix $A-C$ is positive semidefinite, so the quadratic map $z \mapsto z^T(A-C-\epsilon uu^T)z$ is nonnegative over $\R^n$ for $\epsilon$ small enough.
By symmetry, this is also true when changing $A$ into $B$, so that for $\epsilon$ small enough, we have
$	A  \succeq  C + \epsilon uu^T\,, 
	B  \succeq  C + \epsilon uu^T \,,$
and thus $C$ is not a maximal lower bound.

\subsubsection*{$(ii) \implies (iii)$}~\\
We know from Lemma~\ref{intersectionKernelLemma} that $\Ker (A-B) = \Ker (A-C) \cap \Ker (B-C)$. We choose $K_A$ to be a direct summand of $\Ker (A-B)$ in $\Ker (A-C)$. Similarly, we choose $K_B$ a direct summand of $\Ker (A-B)$ in $\Ker (B-C)$. Recall that $(p,q,r)$ denotes the inertia of $A-B$. We have, $x^T(A-B)x = x^T(A-C)x > 0$ for all nonzero $x \in K_B$, so from the definition of the inertia, $\dim K_B \leq p$ and by symmetry, $\dim K_A \leq q$. Those inequalities are equalities since
\begin{align*}
n = \dim \R^n = \dim K_A + \dim K_B + \dim \Ker ( A-B ) \leq q + p + r = n \,.
\end{align*}
We conclude using the rank-nullity theorem: since $\rk (A-C) = n - \dim \Ker (A-C)$, we have $\rk (A-C) = n - q - r = p$. Similarly, we have $\rk (B-C) = q$.

\subsubsection*{$(iii) \implies (ii)$}~\\
We always have $\Ker ( A-C ) \cap \Ker ( B-C ) \subseteq \Ker (A - B )$, and so
\begin{align*}
\dim \big( \Ker ( A-C ) + \Ker ( B-C ) \big)
& = 2n-p-q \\& - \dim \big( \Ker ( A-C ) \cap \Ker ( B-C ) \big) \\
& \geq 2n-p-q-(n-r) \\
& = n \,.
\end{align*}

\subsubsection*{$(ii) \implies (iv)$}~\\
Without loss of generality, we may assume that $P = I_n$, so that $A-B = J_{p,q,r}$. As before, we can write $\R^n = K_A \oplus K_B \oplus \Ker ( A- B ) $ with
\begin{align*}
	\Ker (A-C) = K_A \oplus \Ker ( A-B ) \,, \;
	\Ker (B-C) = K_B \oplus \Ker ( A-B ) \,.
\end{align*}
We build a basis of $\R^n$ respecting this subspace decomposition: we take a basis $\Bc_A$ of $K_A$, a basis $\Bc_B$ of $K_B$ and $\Bc_{A-B}$ of $\Ker (A-B)$, and our basis of $\R^n$ is $\big[ \Bc_B \,;\, \Bc_A \,;\, \Bc_{A-B} \big]$. In this basis, the matrices of the quadratic forms $A-C$ and $B-C$ are block-diagonal matrices:
\[
A-C =
(M_p \oplus 0_q \oplus 0_r)
\;\;\text{and}\;\;
B-C = (0_p \oplus M_q \oplus 0_r )
 \,,
\]
where the off-diagonal blocks are zero, because the matrices $M_p$ and $M_q$ (respectively of size $p\times p$ and $q\times q$) are positive definite.
The matrix $\Sigma  = M_p^{1/2} \oplus M_q^{1/2}$ is in the indefinite orthogonal group $\Oc(p,q)$, since $\Sigma J_{p,q} \Sigma^T = J_{p,q}$. By Lemma~\ref{thm:gallier}, there is a unique tuple $(M,U,V) \in \Mc_{p,q} \times \Oc(p) \times \Oc(q)$ such that :
\[
\Sigma = 
\begin{pmatrix}
	( I_p + MM^T ) ^ {1/2} & M \\
	M^T & ( I_q + M^TM ) ^ {1/2}
\end{pmatrix} 
\begin{pmatrix}
	U & \\
	& V
\end{pmatrix}  \,.
\]
The matrix $M$ does not depend on the choice of the matrix $\Sigma$: it is easily shown that all matrices $\Xi$ such that $\Xi(\Xi^T) =
M_p \oplus M_q$
 only differ from $\Sigma$ by a block-diagonal orthogonal-block matrix :
\[
\Xi = \Sigma  ( U' \oplus V' )
\;,\;
U' \in \Oc(p), \; V' \in \Oc(q) \,,
\]
and any block-diagonal orthogonal-block matrix of this form multiplied on the right vanishes when computing $C$. Indeed, if we denote
\begin{align*}
S =
\begin{pmatrix}
	( I_p + MM^T ) ^ {1/2} & M \\
	M^T & ( I_q + M^TM ) ^ {1/2}
\end{pmatrix}
\oplus 0_r
\; \text{and} \;
W =
U \oplus V \oplus 0_r
\,,
\end{align*}
we have
$
C = A - (SW)
(I_p \oplus 0_{q} \oplus 0_r)
(SW)^T
= A - S
(I_p \oplus 0_{q} \oplus 0_r)
S
\,.
$

\subsubsection*{$(iv) \implies (ii)$}~\\
Without loss of generality, we may again assume that $P = I_n$. 
After change of basis with the invertible matrix
$
Q :=
S^{-1} + 
( 0_{p} \oplus 0_q \oplus I_r )
$,
we have
$
QAQ^T = 
(I_p \oplus 0_q \oplus 0_{r} )
$ and 
$QBQ^T = 
( 0_p \oplus I_q \oplus 0_r )
$.
The sum of the kernels of those matrices is $\R^n$, thus this is also the case for $A$ and $B$.
This concludes the proof of Theorem~\ref{thm:main_theo}. \hfill $\qed$
\begin{proof}[Proof of Corollary~\ref{cor:homeo}]
We have shown in the proof $(ii) \implies (iv)$ of Theorem~\ref{thm:main_theo} that we can associate to every matrix $\Sigma \in \Oc(p,q)$ a maximal lower bound $C$ of $A$ and $B$, via the continuous map $\Phi$ from $\Oc(p,q)$ to $\Sc_n$ defined by:
\begin{align*}
\Phi : \Sigma \mapsto A - P
\begin{pmatrix}
\Sigma & \\ & 0_r
\end{pmatrix}
\begin{pmatrix} I_p \oplus 0_q & \\ & 0_{r} \end{pmatrix}
\begin{pmatrix}
\Sigma & \\ & 0_r
\end{pmatrix}^T
 P^T \,.
\end{align*}
Moreover, we have previously shown that two matrices $\Sigma_1, \Sigma_2 \in \Oc(p,q)$ produce the same maximal lower bound $C$ if and only if $\Sigma_1 = \Sigma_2
(U \oplus V)$ for some matrices $U \in \Oc(p), V\in \Oc(q)$. This proves that the map $\Phi$ is a bijection from $\Oc(p,q)/ \big(\Oc(p) \times \Oc(q)\big)$ to the set of maximal lower bounds of $A,B$. By Lemma~\ref{thm:gallier}, the quotient set can be identified to $\Mc_{p,q} \cong \R^{pq}$ by means of the continuous bijection $S$ defined by:
\begin{align*}
S : M \mapsto 
\begin{pmatrix}
( I_p + MM^T ) ^ {1/2} & M \\
M^T & ( I_q + M^TM ) ^ {1/2}
\end{pmatrix} \,.
\end{align*}
It remains to show that the map $\Phi \circ S$ has a continuous inverse. We write
\begin{align*}
\Phi \circ S(M) = A - P
\bigg[
\begin{pmatrix}
A(M) & B(M) \\
B(M)^T & M^TM  \\
\end{pmatrix}
\oplus 0_r \bigg]
P^T \,,
\end{align*}
with $A(M) := I + MM^T$ and $B(M) := (I+MM^T)^{1/2}M$. 
The matrix $M$ can be recovered continuously with $M = A(M)^{-1/2} B(M)$, since $A(M) \succeq I$ cannot vanish.
\end{proof}

\begin{proof}[Proof of Corollary~\ref{cor:homeo_pos}]
Before treating the general case, we shall prove the corollary when $A,B$ are positive definite. Note that, in this case, we have $[A]B = B$ and $[B]A = A$.

First, the inertias of the matrices $A-B$ and $B^{-1} - A^{-1}$ are the same:  the matrices $A,B$ can be reduced simultaneously by an invertible congruence $X \mapsto PXP^T$ to diagonal matrices with positive diagonal elements $a_i$ and $b_i$. The fact that $a_i- b_i> 0$ is equivalent to $b_i^{-1} - a_i^{-1} >0$ shows that the inertias are identical. Also, note that $p'+q'+\dim \Ker (A-B) = n$, so matrices that have this rank are invertible.

Since the map $X \mapsto X^{-1}$ is monotonically decreasing on the set of positive definite matrices, it is a (continuous) bijection between
the set of minimal upper bounds of $A^{-1},B^{-1}$ (which are positive definite definite) and the set of positive definite maximal lower bounds of $A,B$.
By Corollary~\ref{cor:homeo}, the former set is homeomorphic to $\Oc(p',q') / \big(\Oc(p') \times \Oc(q')\big) \cong \R^{p'q'}$, so the same is true for the latter set.

Now let $A,B$ denote positive semidefinite matrices.
We may assume that $\Ker (A-B) = \{0\}$, since it does not influence the structure of the set of maximal lower bounds of $A,B$ by Theorem~\ref{thm:main_theo}, so that $\Ker A \cap \Ker B = \{0\}$.

Let $R_{A,B}$ denote the set $\Image A \cap \Image B$.
We claim that there are direct summands $R_A$ and $R_B$ of $R_{A,B}$ in $\Image A$ and $\Image B$ respectively so that the matrices of the quadratic forms $A,B$ are block-diagonal in $\R^n = R_A \oplus R_{A,B} \oplus R_B$.

Indeed, we have $\R^n = \Ker B \oplus R_{A,B} \oplus \Ker A$. In such a decomposition, the quadratic forms $A,B$ have matrices of the form
\begin{align*}
A = \begin{pmatrix}
A_{11} & A_{12} \\
A_{12}^T & A_{22} 
\end{pmatrix} \oplus 0_b
\quad\text{and}\quad
B = 0_a \oplus \begin{pmatrix}
B_{22} & B_{23} \\
B_{23}^T & B_{33}
\end{pmatrix} \,.
\end{align*}
We define the matrix $U$ mapping $w = (x,y,z) \in \Ker B \oplus R_{A,B} \oplus \Ker A$ to $Uw = (x -A_{11}^{-1} A_{12}y,y,z - B_{33}^{-1} B_{23}^Ty)$.
One can easily check that the subspaces $R_A$ and $R_B$ defined as the image of $\Ker B$ and $\Ker A$ respectively by $U$ satisfy the desired condition.
Moreover, up to a transformation $X \mapsto V^TXV$ with $V$ block-diagonal, we may assume that 
%
$A = I_a \oplus S_A \oplus 0_b$ and
$B = 0_a \oplus S_B \oplus I_b$,
where $S_A,S_B$ denote positive definite matrices such that $S_A - S_B = J_{p',q'}$.
Note that the short $[B]A$ (resp. $[A]B$) is given by $0_a \oplus S_A \oplus 0_b$ (resp. $0_a \oplus S_B \oplus 0_b$).

Let $C$ denote a positive semidefinite maximal lower bound of $A,B$. Using the characterization in Theorem~\ref{thm:main_theo}, $C$ is given in block form by
\begin{align*}
C = 
\begin{pmatrix}
-XX^T - YY^T & * & * \\
* &* &* \\
*& * &- Y^TY - W^TW
\end{pmatrix}
\end{align*}
%
with $M =
\begin{psmallmatrix}
X & Y \\
Z & W
\end{psmallmatrix}
\in \Mc_{a+p',b+q'}$.

The fact that $C$ is positive semidefinite implies that $X,Y,W$ are zero matrices, so that $C = 0_a \oplus S_C \oplus 0_b$, where the matrix $S_C$ is given by
\begin{align*}
S_C = S_A - 
\begin{pmatrix} 
I_{p'} + ZZ^T & (I_{p'}+ZZ^T)^{1/2}Z \\ 
Z^T(I_{p'}+ZZ^T)^{1/2} & Z^TZ 
\end{pmatrix} \,.
\end{align*}
By Theorem~\ref{thm:main_theo}, $S_C$ is
a (positive semidefinite) maximal lower bound of the matrices $S_A$ and $S_B$ . This concludes the proof since $S_A,S_B$ are positive definite.
\end{proof}


\section{Maximal lower bounds selection under tangency constraints}
\label{mlbstc}

\subsection{Notation and preliminary lemma}

We first give some notation that will be useful in the sequel.
We define the linear operators $\pi_p$, $\pi_q$ and $\pi_r$, mapping respectively $\R^n$ to $\R^p$, $\R^q$ and $\R^r$, that select the first $p$ coordinates, the following $q$ and the last $r$ coordinates. Their matrices in the canonical basis of $\R^n$ are
\[
\pi_p =
\begin{pmatrix}
I_p & 0_{pq} & 0_{pr}
\end{pmatrix},
\qquad
\pi_q =
\begin{pmatrix}
0_{qp} & I_q & 0_{qr}
\end{pmatrix},
\qquad
\pi_r =
\begin{pmatrix}
0_{rp} & 0_{rq} & I_r
\end{pmatrix}
\,.
\]

We denote by $\|\cdot\|$ the spectral norm (largest singular value) of a matrix.
We define $\Bc_{p,q}$ to be the open unit ball of $\Mc_{p,q}$ with respect to this norm:
\begin{align*}
\Bc_{p,q} := \big\{ X \in \Mc_{p,q} \mid \|X\|  < 1 \big\} \,.
\end{align*}

\begin{lemma}
\label{propositionPhiProperties}
The map $\phi_{p,q}$ from
$\Mc_{p,q}$ to  $\Bc_{p,q}$
defined by :
\begin{align*}
\phi_{p,q}(X)
= \big( I_q + XX^T \big) ^{-1/2} X
\end{align*}
is a bijection, with inverse
\begin{align*}
\psi_{p,q}(Y)= 
\big( I_q - YY^T \big) ^{-1/2} Y \enspace .
\end{align*}
Moreover, 
\begin{align*}
\phi_{p,q}(X) = X \big(I_p + X^TX\big) ^{-1/2}
\quad 
\text{and}
\quad
\big(\phi_{p,q}(X)\big)^T = \phi_{q,p}(X^T)
 \enspace .
\end{align*}
\end{lemma}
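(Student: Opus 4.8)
The plan is to verify the three assertions in turn, relying on the fact that all the matrix functions involved are defined through the spectral calculus of positive (semi)definite matrices and hence commute appropriately. First I would establish the two alternative formulas for $\phi_{p,q}$, namely $(I_q+XX^T)^{-1/2}X = X(I_p+X^TX)^{-1/2}$. The key observation is that $XX^T$ and $X^TX$ are intertwined by $X$: one has the intertwining identity $X(I_p+X^TX) = (I_q+XX^T)X$, and more generally $Xf(X^TX) = f(XX^T)X$ for any function $f$ admissible on the spectra. Applying this with $f(t)=(1+t)^{-1/2}$ yields the two forms at once. The same intertwining immediately gives the transpose identity $(\phi_{p,q}(X))^T = \phi_{q,p}(X^T)$, since $\big((I_q+XX^T)^{-1/2}X\big)^T = X^T(I_q+XX^T)^{-1/2}$, which is exactly the second form of $\phi_{q,p}$ applied to $X^T$ (after swapping the roles of $p$ and $q$, so that $X^TX$ plays the role of the "$YY^T$" term).

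Next I would check that $\phi_{p,q}$ maps $\Mc_{p,q}$ into $\Bc_{p,q}$. Writing $Y = \phi_{p,q}(X)$, I would compute $YY^T = (I_q+XX^T)^{-1/2}XX^T(I_q+XX^T)^{-1/2}$; since this matrix is a function of $XX^T$ via $t\mapsto t/(1+t)$, its eigenvalues are exactly $\lambda_i/(1+\lambda_i)$ where $\lambda_i \geq 0$ are the eigenvalues of $XX^T$. Each such value is strictly less than $1$, so $\|Y\|^2 = \|YY^T\| < 1$, confirming $Y \in \Bc_{p,q}$.

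The heart of the proof is showing that $\psi_{p,q}$ is a two-sided inverse of $\phi_{p,q}$. I would verify $\psi_{p,q}\circ\phi_{p,q} = \mathrm{id}$ by the same spectral-calculus route: with $Y = \phi_{p,q}(X)$ one has $I_q - YY^T$ equal to the function $t\mapsto 1/(1+t)$ evaluated at $XX^T$, so $(I_q-YY^T)^{-1/2}$ corresponds to $t\mapsto (1+t)^{1/2}$, and $(I_q-YY^T)^{-1/2}Y$ collapses back to $X$ after cancellation of the $(1+XX^T)^{\pm1/2}$ factors. The reverse composition follows symmetrically, noting that $\psi_{p,q}$ has the analogous alternative form $Y(I_p-Y^TY)^{-1/2}$ and sends $\Bc_{p,q}$ into $\Mc_{p,q}$ because $I_p-Y^TY \succ 0$ whenever $\|Y\|<1$. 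I expect the main obstacle to be purely bookkeeping: ensuring that every scalar function is applied to the correct one of the two paired matrices $XX^T$ (size $q$) and $X^TX$ (size $p$), and that each appeal to the intertwining identity $Xf(X^TX)=f(XX^T)X$ is legitimate, i.e. that the functions are continuous on the relevant spectra and the inverses exist (which they do, since $I_q+XX^T \succeq I_q$ and $I_p-Y^TY \succ 0$ are uniformly bounded away from being singular).
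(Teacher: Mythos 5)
Your proposal is correct, but it follows a genuinely different route from the paper. The paper's proof is a reduction argument: writing $X=UDV^T$ (singular value decomposition), it observes the equivariance $\phi_{p,q}(UDV^T)=U\phi_{p,q}(D)V^T$ (and likewise for $\psi_{p,q}$), so that all the identities need only be checked when $X$ is a diagonal-block matrix, where they become scalar computations of the form $\psi(\phi(d))=d$ with $\phi(d)=d(1+d^2)^{-1/2}$. Your proof instead works directly with the functional calculus, taking as key lemma the intertwining identity $Xf(X^TX)=f(XX^T)X$ (valid here since $f(t)=(1+t)^{-1/2}$ is defined on the union of the two spectra, so a single interpolating polynomial serves both matrices), and then derives the alternative form, the transpose identity, the range condition $\|\phi_{p,q}(X)\|<1$ via the spectral mapping $t\mapsto t/(1+t)$, and the two compositions $\psi_{p,q}\circ\phi_{p,q}=\mathrm{id}$, $\phi_{p,q}\circ\psi_{p,q}=\mathrm{id}$ by cancellation of commuting functions of $XX^T$ (resp.\ $YY^T$). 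The trade-off: the paper's SVD reduction is shorter and reduces everything to scalar arithmetic at one stroke, whereas your argument avoids any decomposition, treats all four claims uniformly from one identity, and carries over verbatim to settings where a singular value decomposition is less convenient (e.g.\ bounded operators on Hilbert space, which is the setting of Ando's theorem cited in the introduction). One bookkeeping remark you implicitly inherit from the paper: with the paper's convention that $\Mc_{p,q}$ consists of $p\times q$ matrices, the product $XX^T$ is $p\times p$, so the identity matrices in the displayed formulas for $\phi_{p,q}$ and $\psi_{p,q}$ should read $I_p$ rather than $I_q$ (and dually in the ``Moreover'' clause); your proof is consistent once this is fixed on both sides.
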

\begin{proof}
Let $X=UDV^T$ denote the singular value decomposition of $X$, so that $U,V$
are orthogonal matrices, and $D$ is a matrix consisting of a diagonal
block and a zero block. Then,
$\phi_{p,q}(X)= U\phi_{p,q}(D)V^T$, and a similar property
holds for the map
$\psi_{p,q}$.  Therefore, it suffices to check that $\psi_{p,q} \comp \phi_{p,q}  (X)= X$ when $X=D$, which is straighforward. By symmetry, we obtain
that $\phi_{p,q}  \comp \psi_{p,q}  (Y)=Y$ holds for all $Y$. The other properties are proved similarly.
\end{proof}

\subsection{Statement of the problem and the theorem}

As stated in Theorem~\ref{thm:main_theo}, the kernels $\Ker (A-C)$ and $\Ker (B-C)$ are central to the characterization of maximal lower bounds. 
In the following, we investigate the problem of the selection of a maximal lower bound of two symmetric matrices where subspaces of those kernels have been predetermined.
When $x^TCx > 0$, the line $\R x$ meets the surface $\{ x \mid x^TCx = 1 \}$ at two opposite points. Moreover, if $x \in \Ker (A-C)$, the surfaces $\{ x \mid x^TCx = 1 \}$ and $\{ x \mid x^TAx = 1 \}$ are tangent at those points.
When $x^TCx \leq  0$, it may be interpreted as a tangency at $\infty$.
For this reason, constraints on the kernels are called \emph{tangency constraints}.

Finally, following Theorem~\ref{thm:main_theo}, the dimension of the kernel of $A-B$ does not influence the structure of the set of maximal lower bound of $A$ and $B$. Thus, we assume that a reduction has been done and state Problem~\ref{prob:cons} and Theorem~\ref{thm:sec_theo} accordingly.

\begin{problem}[Maximal lower bounds with tangency constraints]
\label{prob:cons}
Let $A,B \in \Sc_n$ and let $(p,q,0)$ denote the inertia of $A-B$. Let $\Uc,\Vc$ be subspaces of $\R^n$.
We wish to find in $C \in \Sc_n$:
\begin{align*}
\begin{cases}
C\;\text{is a maximal lower bound of}\; A,B \\
\forall u \in \Uc, \; Cu = Bu \\
\forall v \in \Vc, \; Cv = Av \\
\end{cases}
\end{align*}
\end{problem}

Our second main result gives conditions for Problem~\ref{prob:cons} to have a solution and, if these conditions are met, a parametrization of the set of solutions. It shows that the set of solutions is of dimension $(p-\dim \Uc)(q-\dim \Vc)$, so that the problem has a unique solution if and only if one of the subspaces has maximal dimension.

\begin{theorem}
\label{thm:sec_theo}
Problem~\ref{prob:cons} has a solution if and only if
\begin{itemize}
	\item $A-B$ is positive definite  over $\Uc$
	\item $A-B$ is negative definite over $\Vc$
	\item $\Uc$ and $\Vc$ are orthogonal with respect to the indefinite scalar product $A-B$
\end{itemize}
If these conditions are met, then  
the set of solutions can be parametrized 
as in~\eqref{it:4} of Theorem~\ref{thm:main_theo},
with 
\[
	M \in \phi_{p,q}^{-1} ( \Bc_{p,q} \cap \mathcal{W} ) 
\]
where $\Wc$ is the affine subspace of $\Mc_{p,q}$ defined by
\begin{equation}
\label{feasibilityConditions}
	R \in \Wc \iff
	\begin{cases}
		\forall u \in \Uc,\, R^T\pi_p(u) + \pi_q(u) = 0 \\
		\forall v \in \Vc,\, R\pi_q(v) + \pi_p(v) = 0
	\end{cases}  \,.
\end{equation}
As soon as the conditions above are met, the intersection $\Bc_{p,q} \cap \mathcal{W}$ is nonempty.
The subspace $\mathcal{W}$ has dimension $ (p-\dim \Uc)(q-\dim \Vc)$, so that the solution is unique if and only if
\[
\dim \Uc = p \;\text{or}\; \dim \Vc = q \,.
\]
\end{theorem}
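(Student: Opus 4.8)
The plan is to reduce to the case $A-B=J_{p,q}$ and translate the tangency constraints into the linear relations \eqref{feasibilityConditions}, after which every assertion becomes a statement about an affine subspace of $\Mc_{p,q}$ meeting the open ball $\Bc_{p,q}$. First I would pick $P$ with $A-B=PJ_{p,q}P^{T}$ (here $r=0$, so $\Ker(A-B)=\{0\}$) and apply the congruence $X\mapsto P^{-1}XP^{-T}$; this turns the problem into the same problem for $J_{p,q}$ with $\Uc,\Vc$ replaced by $P^{T}\Uc,P^{T}\Vc$, preserving each geometric condition and the form of \eqref{feasibilityConditions}, so I assume $A-B=J_{p,q}$. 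By Theorem~\ref{thm:main_theo} every maximal lower bound is $C=A-S(I_{p}\oplus 0_{q})S$ for a unique $M\in\Mc_{p,q}$, where $S$ is the symmetric matrix of \eqref{it:4}; hence $A-C=S(I_{p}\oplus 0_{q})S$ and $B-C=S(0_{p}\oplus I_{q})S$, giving $\Ker(A-C)=\{x:\pi_{p}(Sx)=0\}$ and $\Ker(B-C)=\{x:\pi_{q}(Sx)=0\}$. Writing $R:=\phi_{p,q}(M)$, using both expressions for $\phi_{p,q}$ and $(\phi_{p,q}(M))^{T}=\phi_{q,p}(M^{T})$ from Lemma~\ref{propositionPhiProperties}, a direct computation (left-multiplying the block equations by $(I_{p}+MM^{T})^{-1/2}$, resp.\ $(I_{q}+M^{T}M)^{-1/2}$) shows that $\Vc\subseteq\Ker(A-C)$ is equivalent to $\pi_{p}(v)+R\pi_{q}(v)=0$ for all $v\in\Vc$, and $\Uc\subseteq\Ker(B-C)$ to $R^{T}\pi_{p}(u)+\pi_{q}(u)=0$ for all $u\in\Uc$. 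These are the defining relations of $\Wc$. Since $\phi_{p,q}$ is a bijection onto $\Bc_{p,q}$, the admissible parameters are exactly $M\in\phi_{p,q}^{-1}(\Bc_{p,q}\cap\Wc)$, which is the asserted parametrization.

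Next I would show the three conditions are necessary. If a solution $C$ exists, then by Theorem~\ref{thm:main_theo} and Lemma~\ref{intersectionKernelLemma} we have $\Ker(A-C)\cap\Ker(B-C)=\Ker(A-B)=\{0\}$ and $A-C,B-C\succeq 0$. For $u\in\Uc\setminus\{0\}$, since $u\in\Ker(B-C)$, $u^{T}(A-B)u=u^{T}(A-C)u\ge 0$, with equality forcing $(A-C)u=0$, hence $u\in\Ker(A-C)\cap\Ker(B-C)=\{0\}$; so $A-B$ is positive definite over $\Uc$, and symmetrically negative definite over $\Vc$. For $u\in\Uc$, $v\in\Vc$, $u^{T}(A-B)v=u^{T}(A-C)v-u^{T}(B-C)v=0$ (first term vanishes as $v\in\Ker(A-C)$, second as $u\in\Ker(B-C)$), giving the orthogonality.

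For the dimension of $\Wc$, I would note that the definiteness conditions make $\pi_{p}|_{\Uc}$ and $\pi_{q}|_{\Vc}$ injective (e.g.\ $\pi_{p}(u)=0$ for $u\in\Uc$ gives $u^{T}(A-B)u=-\|\pi_{q}(u)\|^{2}\le 0$, so $u=0$), whence $\dim\pi_{p}(\Uc)=\dim\Uc\le p$ and $\dim\pi_{q}(\Vc)=\dim\Vc\le q$. The direction space of $\Wc$ is $\{R:R^{T}\pi_{p}(u)=0\ \forall u\in\Uc,\ R\pi_{q}(v)=0\ \forall v\in\Vc\}$, i.e.\ the $R$ with $\Image R\subseteq\pi_{p}(\Uc)^{\perp}$ and $\pi_{q}(\Vc)\subseteq\Ker R$; such $R$ are identified with homomorphisms $\R^{q}/\pi_{q}(\Vc)\to\pi_{p}(\Uc)^{\perp}$, a space of dimension $(p-\dim\Uc)(q-\dim\Vc)$. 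Once nonemptiness is known, $\Wc$ is an affine space of this dimension, and the solution is unique exactly when the dimension is $0$, i.e.\ when $\dim\Uc=p$ or $\dim\Vc=q$.

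It remains to prove $\Bc_{p,q}\cap\Wc\neq\emptyset$ under the three conditions, which I expect to be the main obstacle. The key observation is that for $\|R\|<1$ the subspaces $G_{R}:=\{(a,-R^{T}a):a\in\R^{p}\}=\Ker(B-C)$ and $H_{R}:=\{(-Rb,b):b\in\R^{q}\}=\Ker(A-C)$ form a $J_{p,q}$-orthogonal direct sum $\R^{n}=G_{R}\oplus H_{R}$, with $J_{p,q}$ positive definite on the $p$-dimensional $G_{R}$ and negative definite on the $q$-dimensional $H_{R}$; conversely $R\mapsto G_{R}$ is a bijection between $\Bc_{p,q}$ and the $p$-dimensional subspaces on which $J_{p,q}$ is positive definite, with $H_{R}=G_{R}^{\perp_{J_{p,q}}}$. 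Thus finding $R\in\Bc_{p,q}\cap\Wc$ amounts to producing such a decomposition with $\Uc\subseteq G_{R}$ and $\Vc\subseteq H_{R}$, which I would do by a Witt-type extension: under the three conditions $\Uc\cap\Vc=\{0\}$ and $J_{p,q}$ is nondegenerate of inertia $(\dim\Uc,\dim\Vc)$ on $\Uc\oplus\Vc$, so $\R^{n}=(\Uc\oplus\Vc)\oplus N$ is $J_{p,q}$-orthogonal with $N:=(\Uc\oplus\Vc)^{\perp_{J_{p,q}}}$ of inertia $(p-\dim\Uc,q-\dim\Vc)$. Splitting $N=N^{+}\oplus N^{-}$ into its $J_{p,q}$-positive and negative parts and setting $W^{+}:=\Uc\oplus N^{+}$, $W^{-}:=\Vc\oplus N^{-}$ gives complementary $J_{p,q}$-orthogonal subspaces with $J_{p,q}$ positive definite on $W^{+}$ (dimension $p$) and negative definite on $W^{-}$ (dimension $q$). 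The $R$ with $G_{R}=W^{+}$ then satisfies $\|R\|<1$, and since $H_{R}=(W^{+})^{\perp_{J_{p,q}}}=W^{-}\supseteq\Vc$ while $G_{R}\supseteq\Uc$, it lies in $\Wc$. This establishes $\Bc_{p,q}\cap\Wc\neq\emptyset$ and completes the argument.
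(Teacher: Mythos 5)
Your proof is correct, and while it shares the paper's skeleton --- reduce to $A-B=J_{p,q}$, use Theorem~\ref{thm:main_theo} and $\phi_{p,q}$ to translate the tangency constraints into the affine conditions~\eqref{feasibilityConditions}, then study $\Wc\cap\Bc_{p,q}$ --- the two nontrivial steps are executed by genuinely different means. For necessity, the paper rules out isotropic vectors of $\Uc\cup\Vc$ by invoking Proposition~\ref{problemSingleConstraint}, whereas you argue directly that $u^T(A-C)u=0$ with $A-C\succeq0$ forces $(A-C)u=0$, hence $u\in\Ker(A-C)\cap\Ker(B-C)=\Ker(A-B)=\{0\}$ by Lemma~\ref{intersectionKernelLemma}; this is shorter and more self-contained. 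For nonemptiness of $\Wc\cap\Bc_{p,q}$, the paper first enlarges $\Uc$ and $\Vc$ to dimensions $p$ and $q$, then applies Lemma~\ref{lem:Jpq_neq_def} to $\Vc$ and recovers the relations for $\Uc$ from the orthogonality hypothesis; you instead split the $J_{p,q}$-orthocomplement $N$ of $\Uc\oplus\Vc$ into definite parts $N^{\pm}$ (inertia additivity giving $N$ inertia $(p-\dim\Uc,\,q-\dim\Vc)$), set $W^{+}=\Uc\oplus N^{+}$ and $W^{-}=\Vc\oplus N^{-}$, and use the correspondence between contractions $R\in\Bc_{p,q}$ and $J_{p,q}$-positive $p$-planes $G_R=\{(a,-R^Ta):a\in\R^p\}$. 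This buys two things: taking the complement of $\Uc\oplus\Vc$ jointly makes every needed orthogonality relation hold by construction (the paper's $\Uc_0\subseteq[(A-B)\cdot\Uc]^\perp$ is not visibly $(A-B)$-orthogonal to $\Vc$, a point your route never has to address), and it makes explicit the Grassmannian parametrization mentioned in the remark following the theorem. Note only that the converse half of your ``key observation'' --- every $p$-dimensional subspace on which $J_{p,q}$ is positive definite equals $G_R$ for a unique $R$ with $\|R\|<1$ --- is precisely the mirror image of Lemma~\ref{lem:Jpq_neq_def}, so it should be cited or given the same graph-of-a-contraction proof rather than left as an assertion. Finally, your identification of the direction space of $\Wc$ with the linear maps from $\R^q/\pi_q(\Vc)$ to $\pi_p(\Uc)^\perp$ replaces the paper's two-sided perturbation argument and yields the same dimension $(p-\dim\Uc)(q-\dim\Vc)$.
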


\begin{remark}
When $\Uc$ and $\Vc$ have maximal dimension, $\Vc$ is the orthogonal complement of $\Uc$ with respect the indefinite form $A-B$. 
Thus Theorem~\ref{thm:sec_theo} establishes a bijective correspondance between maximal lower bounds of $A,B$ and $p$-dimensionnal subspaces over which the matrix $A-B$ is positive definite.
In this way,
the set of maximal lower bounds is parametrized by an open semi-algebraic subset of 
the Grassmannian $\Gr(n,p)$.
\end{remark}

\subsection{Preliminary lemmas}

Before proving Theorem~\ref{thm:sec_theo}, we prove two useful results. First, Lemma~\ref{lem:Jpq_neq_def} shows that when the matrix $J_{p,q}$ is negative definite over a subspace $\Vc$, then there is a contractive mapping from the last $q$ coordinates of any vector of $\Vc$ to its first $p$ coordinates.

\begin{lemma}
\label{lem:Jpq_neq_def}
Let $\Vc$ be a subspace of $\R^n$ over which $J_{p,q}$ is negative definite, with $p+q = n$.
There is a matrix $R \in \Mc_{p,q}$ with $\norm{R} < 1$ such that :
\[
\forall x \in \Vc, \; \pi_p(x) = R \pi_q(x) \,.
\]
\end{lemma}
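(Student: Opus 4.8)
The plan is to reinterpret the negative-definiteness hypothesis as a pointwise strict contraction estimate, build $R$ as the linear map sending the last $q$ coordinates of a vector in $\Vc$ to its first $p$ coordinates, and finally upgrade the pointwise estimate to a strict bound on the spectral norm by a compactness argument.

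First I would record that, since the matrix of $J_{p,q}$ in the canonical basis is $I_p \oplus (-I_q)$, we have $x^T J_{p,q} x = \|\pi_p(x)\|^2 - \|\pi_q(x)\|^2$ for every $x \in \R^n$. Hence the assumption that $J_{p,q}$ is negative definite on $\Vc$ is precisely the statement that $\|\pi_p(x)\| < \|\pi_q(x)\|$ for every nonzero $x \in \Vc$. In particular, if $x \in \Vc$ satisfies $\pi_q(x) = 0$, then $\|\pi_p(x)\| < 0$ is impossible unless $x = 0$, so the restriction of $\pi_q$ to $\Vc$ is injective.

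Next I would define the desired map first on the subspace $\pi_q(\Vc) \subseteq \R^q$. Because $\pi_q|_{\Vc}$ is injective, every $y \in \pi_q(\Vc)$ arises from a unique $x \in \Vc$ with $\pi_q(x) = y$, and I set $R_0 y := \pi_p(x)$. This defines a linear map $R_0 \colon \pi_q(\Vc) \to \R^p$ satisfying $R_0 \pi_q(x) = \pi_p(x)$ for all $x \in \Vc$, and the estimate above reads $\|R_0 y\| < \|y\|$ for every nonzero $y \in \pi_q(\Vc)$.

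The main obstacle is to guarantee the \emph{strict} inequality $\|R\| < 1$ once $R_0$ is extended to all of $\R^q$: a merely pointwise bound $\|R_0 y\| < \|y\|$ does not by itself control the operator norm. Here, however, $\pi_q(\Vc)$ is finite-dimensional, so its unit sphere is compact and the continuous function $y \mapsto \|R_0 y\|$ attains a maximum $c := \max \{ \|R_0 y\| \mid y \in \pi_q(\Vc),\ \|y\| = 1 \} < 1$. I would then extend $R_0$ to a matrix $R \in \Mc_{p,q}$ by letting $R$ vanish on the orthogonal complement of $\pi_q(\Vc)$ in $\R^q$. Writing any $y \in \R^q$ orthogonally as $y = y_1 + y_2$ with $y_1 \in \pi_q(\Vc)$ gives $Ry = R_0 y_1$ and $\|Ry\| \le c\|y_1\| \le c\|y\|$, whence $\|R\| \le c < 1$. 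Since $R$ agrees with $R_0$ on $\pi_q(\Vc)$, it satisfies $\pi_p(x) = R\pi_q(x)$ for every $x \in \Vc$, which concludes the proof.
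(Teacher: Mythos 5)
Your proof is correct and takes essentially the same route as the paper's: you define the map $\pi_q(x) \mapsto \pi_p(x)$ on $\pi_q(\Vc)$ (well defined because negative definiteness of $J_{p,q}$ on $\Vc$ forces $\pi_q|_{\Vc}$ to be injective), extend it by zero on $\pi_q(\Vc)^\perp$, and deduce $\|R\| < 1$ from the pointwise estimate $\|\pi_p(x)\|_2 < \|\pi_q(x)\|_2$. The only difference is that you make explicit, via compactness of the unit sphere of the finite-dimensional space $\pi_q(\Vc)$, the step upgrading the pointwise strict contraction to a strict bound on the spectral norm --- a point the paper's proof asserts without further justification.
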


\begin{proof}
First, we show that the map $R_c$ from $\R^q$ to $\Vc$ defined by
\[
R_c : z \mapsto v \quad\text{s.t.}\quad v \in \Vc \quad\text{and}\quad \pi_q(x) = v
\]
is well defined.
Let $v,w \in \Vc$ such that $\pi_q(v) = \pi_q(w)$. We have $v-w \in \Vc$, thus $(v-w)^TJ_{p,q}(v-w) \leq 0$. This is rewritten as $\| \pi_p(v) - \pi_p(w) \|_2 \leq \| \pi_q(v) - \pi_q(w) \|_2 = 0$, hence $\pi_p(v) = \pi_q(w)$, which implies $v=w$.
The map $R_c$ is linear as its inverse map is the restriction of the linear map $\pi_q$ to $\Vc$.

Then, we define the linear map $R$ from $\R^q$ to $\R^p$ by $R(x) := \pi_p \circ R_c (x) $ on $\pi_q(\Vc)$ and we choose $R$ to be zero on ${\pi_q(\Vc)}^\perp$.
By definition of $R_c$ and $R$, we have for all $x \in \R^n$, $R \pi_q(x) = \pi_p(x)$.
The matrix $J_{p,q}$ is negative definite over $\Vc$, meaning that $\| \pi_p(x) \|_2 < \| \pi_q(x) \|_2$ when $x \in \Vc$ is nonzero. This implies that the map $R$ is a contraction on $\pi_q(\Vc)$:
\[
\| R \pi_q(x) \|_2 = \| \pi_p(x) \|_2 < \| \pi_q(x) \|_2 \,.
\]
As $R$ is zero on $\pi_q(\Vc)^\perp$, the map $R$ is a contraction on $\R^q$: $\norm{R} < 1$.
\end{proof}

Then, we solve Problem~\ref{prob:cons} in the easiest case, when the subspaces are $\Uc = \{ 0\}$ and $\Vc = \R x$, for $x \in \R^n$.
Since the proposition does not change if $r \neq 0$, we give its statement in the most general case.

\begin{proposition}
\label{problemSingleConstraint}
Let $A,B \in \Sc_n$ and $x \in \R^n$. Then, 
there exists a maximal lower bound $C$ of $A$ and $B$ such that $Ax = Cx$ if and only if $x^TAx < x^TBx$ or $Ax = Bx$.
\end{proposition}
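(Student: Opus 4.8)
The plan is to establish the two implications separately: necessity by a short direct computation (which will not even use maximality of $C$), and sufficiency by translating, via the parametrization of Theorem~\ref{thm:main_theo}, the constraint $Ax=Cx$ into the solvability of a single matrix equation.

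\emph{Necessity.} Suppose some lower bound $C$ of $A,B$ satisfies $Ax=Cx$. Then $x^T(A-C)x=0$, so
\begin{align*}
x^T(A-B)x = x^T(A-C)x - x^T(B-C)x = -\,x^T(B-C)x \leq 0
\end{align*}
since $B-C\succeq 0$. Thus $x^TAx\leq x^TBx$, and if equality holds then $x^T(B-C)x=0$, which forces $(B-C)x=0$ because $B-C$ is positive semidefinite; combined with $Ax=Cx$ this yields $Ax=Bx$. Hence either $x^TAx<x^TBx$ or $Ax=Bx$.

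\emph{Sufficiency.} Here I would fix $P\in\GL_n$ with $A-B=PJ_{p,q,r}P^T$ and set $y:=P^Tx$, $y_1:=\pi_p(y)$, $y_2:=\pi_q(y)$. By Theorem~\ref{thm:main_theo}~\eqref{it:4} (see also Corollary~\ref{cor:homeo}), the maximal lower bounds of $A,B$ are exactly the matrices $C=A-PS(I_p\oplus 0_q\oplus 0_r)SP^T$ obtained as $M$ ranges over $\Mc_{p,q}$, with $S$ as in~\eqref{it:4}. Since $P$ and the $(p+q)\times(p+q)$ block of $S$ are invertible, the explicit form of $S$ together with Lemma~\ref{propositionPhiProperties} yields, after a routine computation,
\begin{align*}
\Ker(A-C)=\big\{\, z\in\R^n \;\big|\; \pi_p(P^Tz)=-\phi_{p,q}(M)\,\pi_q(P^Tz) \,\big\} .
\end{align*}
Writing $R:=\phi_{p,q}(M)$, the constraint $x\in\Ker(A-C)$ thus reads $Ry_2=-y_1$. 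Since $\phi_{p,q}$ is a bijection from $\Mc_{p,q}$ onto the open ball $\Bc_{p,q}$, a maximal lower bound $C$ with $Ax=Cx$ exists if and only if some $R\in\Mc_{p,q}$ with $\norm{R}<1$ satisfies $Ry_2=-y_1$.

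It then remains to solve this equation under the stated hypotheses. As $x^T(A-B)x=y^TJ_{p,q,r}y=\|y_1\|_2^2-\|y_2\|_2^2$ and $Ax=Bx$ amounts to $y_1=y_2=0$, the hypothesis gives either $y_1=y_2=0$ or $\|y_1\|_2<\|y_2\|_2$. In the former case $R=0$ works; in the latter case $y_2\neq 0$ and the rank-one matrix $R:=-y_1y_2^T/\|y_2\|_2^2$ satisfies $Ry_2=-y_1$ and $\norm{R}=\|y_1\|_2/\|y_2\|_2<1$. Either way an admissible $R$ exists, producing a maximal lower bound $C$ with $Ax=Cx$. The main obstacle is the sufficiency direction, and within it the identification of $\Ker(A-C)$ in terms of the parameter $M$; once this kernel is expressed through the map $\phi_{p,q}$, the remaining solvability is the one-line contraction argument above, a rank-one specialization of Lemma~\ref{lem:Jpq_neq_def}.
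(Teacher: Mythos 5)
Your proof is correct, and it splits the comparison with the paper in an interesting way. The sufficiency direction is essentially the paper's own argument: both translate, via the parametrization~\eqref{it:4} of Theorem~\ref{thm:main_theo}, the constraint $Ax=Cx$ into the equation $R\,\pi_q(y)=-\pi_p(y)$ for a matrix $R$ with $\norm{R}<1$, and both solve it with the same rank-one matrix $R=-\pi_p(y)\pi_q(y)^T/\|\pi_q(y)\|_2^2$ (your version keeps a general $P$ and makes the kernel identity $\Ker(A-C)=\{z \mid \pi_p(P^Tz)=-\phi_{p,q}(M)\pi_q(P^Tz)\}$ explicit, which is a correct routine computation; the paper normalizes $P=I_n$, a cosmetic difference). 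The necessity direction, however, is genuinely different. The paper proves it through the same machinery: it writes $C$ in terms of the parameter $M$, rewrites $Ax=Cx$ as $\phi_{p,q}(M)\pi_q(x)=-\pi_p(x)$, and uses the strict contraction property $\norm{\phi_{p,q}(M)}<1$ to show that $x^T(A-B)x=0$ forces $\pi_p(x)=\pi_q(x)=0$, hence $(A-B)x=0$. Your argument is instead purely order-theoretic: from $x^T(A-C)x=0$ and $B-C\succeq 0$ you get $x^TAx\leq x^TBx$, and in the equality case the standard fact that $x^TMx=0$ implies $Mx=0$ for $M\succeq 0$ yields $Bx=Cx=Ax$. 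This is more elementary (no appeal to Theorem~\ref{thm:main_theo} or Lemma~\ref{propositionPhiProperties} in that direction) and proves something slightly stronger: the condition is necessary for \emph{any} lower bound $C$ satisfying $Ax=Cx$, maximal or not. What the paper's route buys is uniformity, since the single $\phi_{p,q}$-computation serves both implications and is reused in the proof of Theorem~\ref{thm:sec_theo}; what your route buys is brevity and generality, and it is in fact the same positive-semidefiniteness trick the paper itself employs in the step $\neg(i)\implies\neg(ii)$ of Theorem~\ref{thm:main_theo}.
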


\begin{proof}
Without loss of generality, we may assume that $A-B = J_{p,q,r}$.\\
($\implies$):\\
Assume that $C$ is a maximal lower bound of $A$ and $B$ such that $Ax = Cx$.
The constraint $Ax = Cx$ implies that $x^TAx = x^TCx \le x^TBx$ holds. We shall thus show that if $x^TAx = x^TBx$, then $Ax = Bx$.
Using Theorem~\ref{thm:main_theo}, there is $M \in \Mc_{p,q}$ such that
\[
C = A -
\begin{pmatrix}
I+MM^T & (I+MM^T)^{1/2}M \\
M^T(I+MM^T)^{1/2} & M^TM
\end{pmatrix}
\oplus 0_r
\,.
\]
The condition $Ax = Cx$ is rewritten as
\begin{align*}
\label{phi_pi_pi_constraint}
\phi_{p,q}(M)\pi_q(x) = -\pi_p(x) \,.
\end{align*}
The function $\phi_{p,q}$ maps the matrix $M$ to an element in the open ball $\Bc_{p,q}$, so $\norm{\phi_{p,q}(M)} < 1$. It follows that $\|\pi_p(x)\|_2 =  \| \phi_{p,q}(M)\pi_q(x) \|_2 < \| \pi_q(x) \|_2$ if $\pi_q(x) \neq 0$.
However, the assumption $x^T(A-B)x = 0$ implies $\| \pi_p(x) \|_2 = \| \pi_q(x) \|_2$, thus $\pi_q(x) = 0$ and $\pi_p(x)= 0$. We conclude with $(A-B)x = \pi_p(x) - \pi_q(x) = 0$.\\
($\impliedby$):\\
If $Ax = Bx$, then, by Theorem~\ref{thm:main_theo}, every maximal lower bound satisfies $Ax = Cx$. If $x^T(A-B)x < 0$, then $\| \pi_p(x) \|_2 < \| \pi_q(x) \|_2$. It is easily seen that using
\[
M = \phi_{p,q}^{-1} \Bigg( \frac{-\pi_p(x)\pi_q(x)^T}{{\|\pi_q(x)\|_2}^2} \Bigg)
\]
in the characterization in Theorem~\ref{thm:main_theo} provides a solution satisfying $Ax = Cx$.
\end{proof}

\subsection{Proof of Theorem~\ref{thm:sec_theo}}
\subsubsection*{Feasibility ($\implies$)}
~\\
Given a solution $C$ to Problem~\ref{prob:cons}, we have for $u \in \Uc$, $u^T(A-B)u = u^T(A-C)u - u^T(B-C)u$, where the first term is nonnegative and the second is zero. Hence $A-B$ is nonnegative over $\Uc$. For $v \in \Vc$, the reverse holds and $A-B$ is nonpositive over $\Vc$.
Moreover, if we have $x^T(A-B)x = 0$ for some $x \in \Uc \cup \Vc$, then by Proposition~\ref{problemSingleConstraint}, we have $(A-B)x = 0$ and $x=0$ as $A-B \in GL_n$. This shows that $A-B$ is positive definite over $\Uc$ and negative definite over $\Vc$.

Finally, for $u \in \Uc$ and $v \in \Vc$, as $\Uc \subseteq \Ker B-C$ and $\Vc \subseteq \Ker A-C$, we have $u^T(A-B)v = u^T(A-C)v - u^T(B-C)v = 0$, so $\Uc$ and $\Vc$ are orthogonal with respect to $A-B$.

\subsubsection*{Feasibility ($\impliedby$)}
~\\
We will use the characterization~\eqref{it:4} in Theorem~\ref{thm:main_theo} to build a solution to Problem~\ref{prob:cons}. Without loss of generality, we may assume that we work in a basis of $\R^n$ revealing the inertia of $A-B = J_{p,q}$.
Furthermore, we may assume that $\dim \Uc = p$ and $\dim \Vc = q$.
If this is not the case, let $\Uc_0$ denote a subspace of $\big[ (A-B) \cdot \Uc\big]^\perp$  over which $A-B$ is positive definite that has maximal dimension. Then let $\Vc_0$ denote a subspace of $\big[ (A-B) \cdot \Vc\big]^\perp \cap \big[ (A-B) \cdot (\Uc \oplus \Uc_0)\big]^\perp$ over which $A-B$ is negative definite that has maximal dimension.
The subspaces $\Uc \oplus \Uc_0$ and $\Vc \oplus \Vc_0$ then satisfy the assumptions.
We will prove that there is a matrix $R$ of size $p \times q$ satisfying $\norm{R} < 1$ and
\begin{align*}
u \in \Uc \iff \pi_q(u) = -R^T\pi_p(u) \,,\quad
v \in \Vc \iff \pi_p(v) = -R\pi_q(v)
\,.
\end{align*} 
The proof is done in two steps. First, we build a matrix $R$ satisfying the second and third equivalences using Lemma~\ref{lem:Jpq_neq_def}.
The matrix $J_{p,q}$ is negative definite over $\Vc$, so that for all nonzero $x\in\Vc$, we have $\norm{\pi_q(x)}_2^2 > \norm{\pi_p(x)}_2^2 \geq 0$, so $\pi_q(x) \neq 0$ and $\pi_q(\Vc) = \R^q$.
Then, we use the orthogonality condition to show that the first equivalence holds. Let $u \in \Uc$ and $v \in \Vc$. We have
\begin{align*}
\pi_q(v)^T\big(R^T\pi_p(u) - \pi_q(u)\big) & = \pi_p(u)^T\pi_p(v) - \pi_q(u)^T\pi_q(v) \\
					& = u^TJ_{p,q}v \\
					& = 0 \,.
\end{align*}
Hence $R^T\pi_p(u) - \pi_q(u) \in \R^q$ is orthogonal to $\pi_q(\Vc) = \R^q$, and is thus zero. It now suffices to take $M = \phi_{p,q}^{-1}(R)$ to build a solution to Problem~\ref{prob:cons} using~\eqref{it:4} in Theorem~\ref{thm:main_theo}.

\subsubsection*{Parametrization}
~\\
Let $C$ be a solution of Problem~\ref{prob:cons}. According to Theorem~\ref{thm:main_theo}, we can associate with $C$ a unique $M \in \Mc_{p,q}$. 
Given vectors $u \in \Uc$ and $v \in \Vc$, the constraints $Av = Cv$ and $Bu = Cu$ can be rewritten as
\begin{align*}
\phi_{p,q}(M)^T\pi_p(u) = -\pi_q(u) \;\;\text{and}\;\; \phi_{p,q}(M)\pi_q(v) = -\pi_p(v) \,.
\end{align*}
Moreover, we have $\norm{\phi_{p,q}(M)} < 1$, so that $\phi_{p,q}(M) \in \mathcal{W} \cap \Bc_{p,q}$.

Conversely, one checks easily that any solution $R$ of~\eqref{feasibilityConditions} provides a solution, as long as $R \in \Bc_{p,q}$. We have shown previously that as soon as the problem is feasible, the set $\mathcal{W} \cap \Bc_{p,q}$ is nonempty.

\subsubsection*{Dimension of $\mathcal{W}$}
~\\
Let $R \in \mathcal{W} \cap \Bc_{p,q}$. If $\dim \Uc \neq p$ and $\dim \Vc \neq q$, since $\dim \pi_p(\Uc) = \dim \Uc$ and $\dim \pi_q(\Vc) = \dim \Vc$, we can choose nonzero vectors $u_p \in {\pi_p(\Uc)}^\perp$ and $v_q \in {\pi_q(\Vc)}^\perp$. The ball $\Bc_{p,q}$ is an open set, thus for small enough positive $\epsilon$, the matrix $R' := R + \epsilon u_p{v_q}^T$ is also in $\Bc_{p,q}$ and  satisfies the equations~\eqref{feasibilityConditions}. The matrix $R'$ produces a different solution than $R$ since $R \neq R'$ and $\phi_{p,q}$ is a bijection, so that $\dim \mathcal{W} \geq \dim {\pi_p(\Uc)}^\perp \times \dim {\pi_p(\Vc)}^\perp = (p-\dim \Uc)(q-\dim \Vc)$.

If $R,R' \in \mathcal{W}$ are solutions of~\eqref{feasibilityConditions}, then we have
\begin{align*}
\forall u \in \Uc,\, (R-R')\pi_q(u) = 0 \qquad
\forall v \in \Vc,\, \pi_p(v)^T(R-R') = 0 \,
\end{align*}
which yields the reverse inequality $\dim \mathcal{W} \leq \dim {\pi_p(\Uc)}^\perp \times \dim {\pi_p(\Vc)}^\perp$.  \hfill $\qed$

\section{Examples}

\label{sec:appli}

We recall the definition of ellipsoids, the equivalence between the inclusion of ellipsoids and the L\"owner order and the algebraic counterpart of tangency between ellipsoids.
\begin{definition}
We denote by $\Qc_A$ the quadric associated with the \emph{symmetric} matrix $A$, defined by:
\begin{align*}
\Qc_A = \{ x \in \R^n \mid x^TAx \leq 1 \} \,.
\end{align*}
The set $\Qc_A$ is convex if and only if the matrix $A$ is positive semidefinite. 
Then, we call the set $\Qc_A$ an ellipsoid, and it will also be denoted by $\Ec_A$.
The set $\Ec_A$ is bounded if and only if the matrix $A$ is positive definite. Moreover, it always has a nonempty interior.
The inclusion of the ellipsoid $\Ec_A$ in the quadric $\Qc_B$ is equivalent to the positivity of the matrix $A-B$, meaning that the inclusion of ellipsoids in quadrics and the ordering of the corresponding matrices is equivalent, up to reversal:
\begin{align*}
\Ec_A \subseteq \Qc_B \iff B \preceq A \,.
\end{align*}
\end{definition}
This also means that, given positive definite matrices $A,B$, the quadric $\Qc_C$ associated with a maximal lower bound $C$ of $A$ and $B$ in the L\"owner order is a minimal upper bound for the ellipsoids $\Ec_A$ and $\Ec_B$, in the inclusion order.
\begin{remark}
In the general case,
\begin{align*}
\Qc_A \subseteq \Qc_B \notimplies B \preceq A \,,
\end{align*}
as shown with $A = 2 \oplus (-2)$ and $B = 1 \oplus (-1)$. For $(x,y)\in \Qc_A$, one clearly has $2x^2-2y^2 \leq 1 \leq 2$, which implies $(x,y) \in \Qc_B$. However, we have $A-B = 1 \oplus (-1) \not\succeq 0$.
\end{remark}

\subsection{In dimension 2: $\Oc(1,1)/\big( \Oc(1) \times \Oc(1) \big)$}

This case arises whenever two symmetric matrices $A$ and $B$ of order $2$ are not comparable. The maps $( X \mapsto X+\lambda I_n)_{\lambda \in \R}$ and $( X \mapsto U^TXU )_{U \in \GL_n}$ are all order-preserving isomorphisms. This implies that, given such an isomorphism $\phi$, the set of maximal lower bounds of $\phi(A)$ and $\phi(B)$ is exactly the image of the set of maximal lower bounds of $A$ and $B$ by the map $\phi$. Thus one can easily show that we may assume without loss of generality that
\begin{align*}
A = \begin{pmatrix}
2 & 0 \\ 0 & 1
\end{pmatrix}
\qquad
B = \begin{pmatrix}
1 & 0 \\ 0 & 2
\end{pmatrix} \,. 
\end{align*}
We have the explicit description of the set of hyperbolic isometries $\Oc(1,1)$:
\begin{align*}
\Oc(1,1) = \Bigg\{
\begin{pmatrix}
\epsilon_1 \ch \theta & \epsilon_2 \sh \theta \\
\epsilon_1 \sh \theta & \epsilon_2 \ch \theta
\end{pmatrix}
\mid \theta \in \R,\, \epsilon_1,\epsilon_2 \in \{-1,1\}
 \Bigg\} \,.
\end{align*}
The quotient set $\Oc(1,1)/\big( \Oc(1) \times \Oc(1) \big)$ is in this case equal to the classical set of hyperbolic rotations:
\begin{align*}
\Oc(1,1)/\big( \Oc(1) \times \Oc(1) \big)= \Bigg\{
\begin{pmatrix}
\ch \theta & \sh \theta \\
\sh \theta & \ch \theta
\end{pmatrix}
\mid \theta \in \R
 \Bigg\} \,.
\end{align*}
Note that in this special case, the quotient set has a group structure.

This gives us the parametrization of the minimal upper bounds $C_\theta$ of $A$ and $B$:
\begin{align*}
C_\theta = \begin{pmatrix}
2 - \ch^2 \theta & \ch \theta \sh \theta \\
\ch \theta \sh \theta & 2 - \ch^2 \theta
\end{pmatrix} \,.
\end{align*}
Moreover,
the tangency subspaces to $\Ec_A$ and $\Ec_B$ are respectively equal to
$\R \begin{pmatrix} \sh \theta & -\ch \theta \end{pmatrix}^T$ and $\R \begin{pmatrix} \ch \theta & -\sh \theta \end{pmatrix}^T$.
This is depicted in Figure~\ref{fig:selection_dim2}.

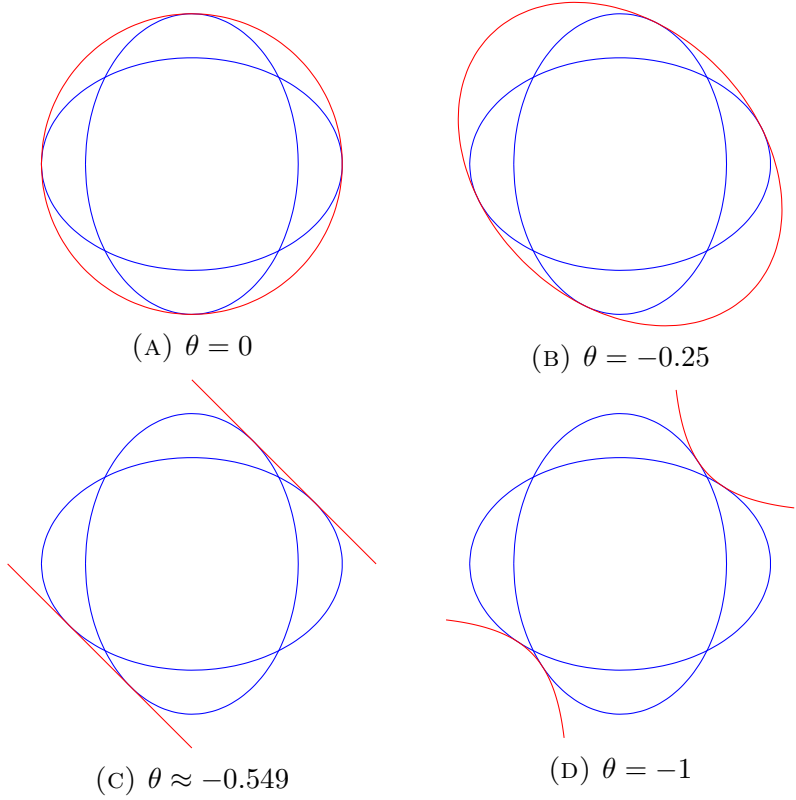
\begin{figure}[!t]
\centering
\begin{subfigure}{0.45\textwidth}
\centering
\begin{tikzpicture}[scale=2]
\draw [blue, domain=-pi:pi, samples=50, smooth] plot ({cos(\x r)},{sin(\x r)/sqrt(2)});
\draw [blue, domain=-pi:pi, samples=50, smooth] plot ({cos(\x r)/sqrt(2)},{sin(\x r)});
\draw [red, domain=-pi:pi, samples=50, smooth] plot 
({cos(\x r)},
 {sin(\x r)});
\end{tikzpicture}
\caption{$\theta = 0$}
\end{subfigure}%
\begin{subfigure}{0.45\textwidth}
\centering
\begin{tikzpicture}[scale=2]
\draw [blue, domain=-pi:pi, samples=50, smooth] plot ({cos(\x r)},{sin(\x r)/sqrt(2)});
\draw [blue, domain=-pi:pi, samples=50, smooth] plot ({cos(\x r)/sqrt(2)},{sin(\x r)});
\draw [red, domain=-pi:pi, samples=50, smooth] plot 
({0.6463772544*cos(\x r) + 0.8602556420*sin(\x r)},
 {0.6463772544*cos(\x r) - 0.8602556420*sin(\x r)});
\end{tikzpicture}
\caption{$\theta = -0.25$}
\end{subfigure}
\begin{subfigure}{0.45\textwidth}
\centering
\begin{tikzpicture}[scale=2]
\draw [blue, domain=-pi:pi, samples=50, smooth] plot ({cos(\x r)},{sin(\x r)/sqrt(2)});
\draw [blue, domain=-pi:pi, samples=50, smooth] plot ({cos(\x r)/sqrt(2)},{sin(\x r)});
\draw [red, domain=0:sqrt(3)/sqrt(2)] plot ({\x}, {-\x + sqrt(3)/sqrt(2)});
\draw [red, domain=-sqrt(3)/sqrt(2):0] plot ({\x}, {-\x - sqrt(3)/sqrt(2)});
\end{tikzpicture}
\caption{
$\theta \approx -0.549$}
\end{subfigure}%
\begin{subfigure}{0.45\textwidth}
\centering
\begin{tikzpicture}[scale=2]
\draw [blue, domain=-pi:pi, samples=50, smooth] plot ({cos(\x r)},{sin(\x r)/sqrt(2)});
\draw [blue, domain=-pi:pi, samples=50, smooth] plot ({cos(\x r)/sqrt(2)},{sin(\x r)});
\draw [red, domain=-0.75:0.75, samples=50, smooth] plot 
({0.5340780207*exp(\x) + 0.0567527431*exp(-\x)},
 {0.0567527431*exp(\x) + 0.5340780207*exp(-\x)});
\draw [red, domain=-0.75:0.75, samples=50, smooth] plot 
({-0.5340780207*exp(\x) - 0.0567527431*exp(-\x)},
 {-0.0567527431*exp(\x) - 0.5340780207*exp(-\x)});
\end{tikzpicture}
\caption{$\theta = -1$}
\end{subfigure}
\caption{Minimal quadrics $\Qc_\theta$ (in red) associated with $\Ec_A$ and $\Ec_B$ (in blue) for various values of $\theta$.}
\label{fig:selection_dim2}
\end{figure}
%

\subsection{The quotient Lorentz set: $\Oc(n,1)/\big( \Oc(n) \times \Oc(1) \big), \; n\geq 2$ }
\label{sec:ex2}

Following Lemma~\ref{thm:gallier}, the set $\Oc(n,1)/\big( \Oc(n) \times \Oc(1) \big)$ can be identified to $\R^n$ via the bijection $\phi := \phi_{n,1}$ defined by
\begin{align*}
\phi : {\mat}\mapsto
\begin{pmatrix}
(I_n + {\mat}{\mat}^T)^{1/2} & {\mat} \\
{\mat}^T & \sqrt{1+{\mat}^T{\mat}}
\end{pmatrix} \,.
\end{align*}

In this case, when $pq = n > 1$, the quotient set does not have a group structure.
Let $(e_i)_{1\leq i\leq n}$ denote the canonical base of $\R^n$.
The product $M := \phi(e_1)\phi(e_2)$ can be computed explicitly and it is not even symmetric: we have $M_{2,1} = 0$ whereas $M_{1,2} = 1$.

We shall illustrate the results of Theorem~\ref{thm:sec_theo} on an example with $p = 2$ and $q = 1$, with the matrices
$A = 2 \oplus 2 \oplus 1$ and $B = 1 \oplus 1 \oplus 2$,
so that $A-B = J_{2,1}$.
Theorem~\ref{thm:main_theo} states that the set of maximal lower bounds of $A$ and $B$, denoted $\mlb_{A,B}$, has dimension $2$ and its elements $C_{\mat}$ are given, for ${\mat}\in \R^2$ by
\begin{align*}
C_{\mat} = 
A - 
\begin{pmatrix}
I_2 + {\mat}{\mat}^T & (I_2 + {\mat}{\mat}^T)^{1/2}{\mat} \\
{\mat}^T(I_2 + {\mat}{\mat}^T)^{1/2} & {\mat}^T{\mat}
\end{pmatrix}
 \,.
\end{align*}
For all ${\mat}\in \R^2$, we also have $\dim \Ker (A-C_{\mat}) = 1$ and $\dim \Ker (B-C_{\mat}) = 2$.

Let $v = (x\;0\;z)^T$ denote some non-zero vector. We shall solve Problem~\ref{prob:cons} in the cases where $(\Uc, \Vc) = (\R v, \{0\})$ and $(\Uc, \Vc) = (\{0\}, \R v)$.

\subsubsection*{Case 1: $\Uc = \R v$ and $\Vc = \{0\}$.}
~\\
In this case, we have $p \neq \dim \Uc$ and $q \neq \dim \Vc$, so by Theorem~\ref{thm:sec_theo} the set of solutions is not reduced to a point.
The problem has a solution if and only if $x^2 > z^2$, and the solutions are parametrized by the contractive elements of the affine subspace $\Wc$ of $\Mc_{2,1}$ defined by
$ R \in \Wc$ if and only if  $R^T (x\,0)^T + z = 0$. Denoting $r = -z/x$, so that $|r| < 1$, we have
\begin{align*}
\Wc =  \big\{ R_t := (r\;t)^T \mid t \in \R \big\} \,.
\end{align*}
Moreover, we have $\norm{R_t}^2 = r^2+t^2$ so that, since $r^2 < 1$, the set $\Wc \cap \Bc_{p,q}$ is non-empty. Then, for $|t| < \sqrt{1-r^2}$, we recover the matrix \begin{align*}
{\mat} = \phi_{p,q}^{-1}(R_t) = (1-r^2-t^2)^{-1/2} \begin{pmatrix}
r \\ t
\end{pmatrix} \,.
\end{align*}
Finally, we get the parametrization of the kernels:
\begin{align*}
\Ker (A-C_{\mat}) & = \Span \Big\{
\begin{pmatrix}
z & -tx & x
\end{pmatrix}^T \Big\}\,,
\\
\Ker (B-C_{\mat}) & = \Span \Big\{
\begin{pmatrix}
x & 0 & z
\end{pmatrix}^T
\;,\;
\begin{pmatrix}
txz & x^2+y^2 & -x^2t
\end{pmatrix}^T
\Big\} \,.
\end{align*}
The set of solutions is parametrized by a single real parameter $t$ as expected from Theorem~\ref{thm:sec_theo}.

\subsubsection*{Case 2: $\Uc = \{0\}$ and $\Vc = \R v$.}
~\\
In this case, we have $q = \dim \Vc$, so the solution is unique.
Indeed, the problem has a solution if and only if $x^2 < z^2$ and the affine subspace $\Wc$ of $\Mc_{2,1}$ is reduced to the point $R := (-x/z\,,0)$, which satisfies $\norm{R} < 1$.
Figure~\ref{fig:selection_dim3} depicts several minimal quadrics associated with the ellipsoids $\Ec_A$ and $\Ec_B$.

\begin{figure}[!t]
\centering
\begin{subfigure}{0.45\textwidth}
\centering
\includegraphics[width=\textwidth,trim={12cm 12cm 12cm 12cm},clip]{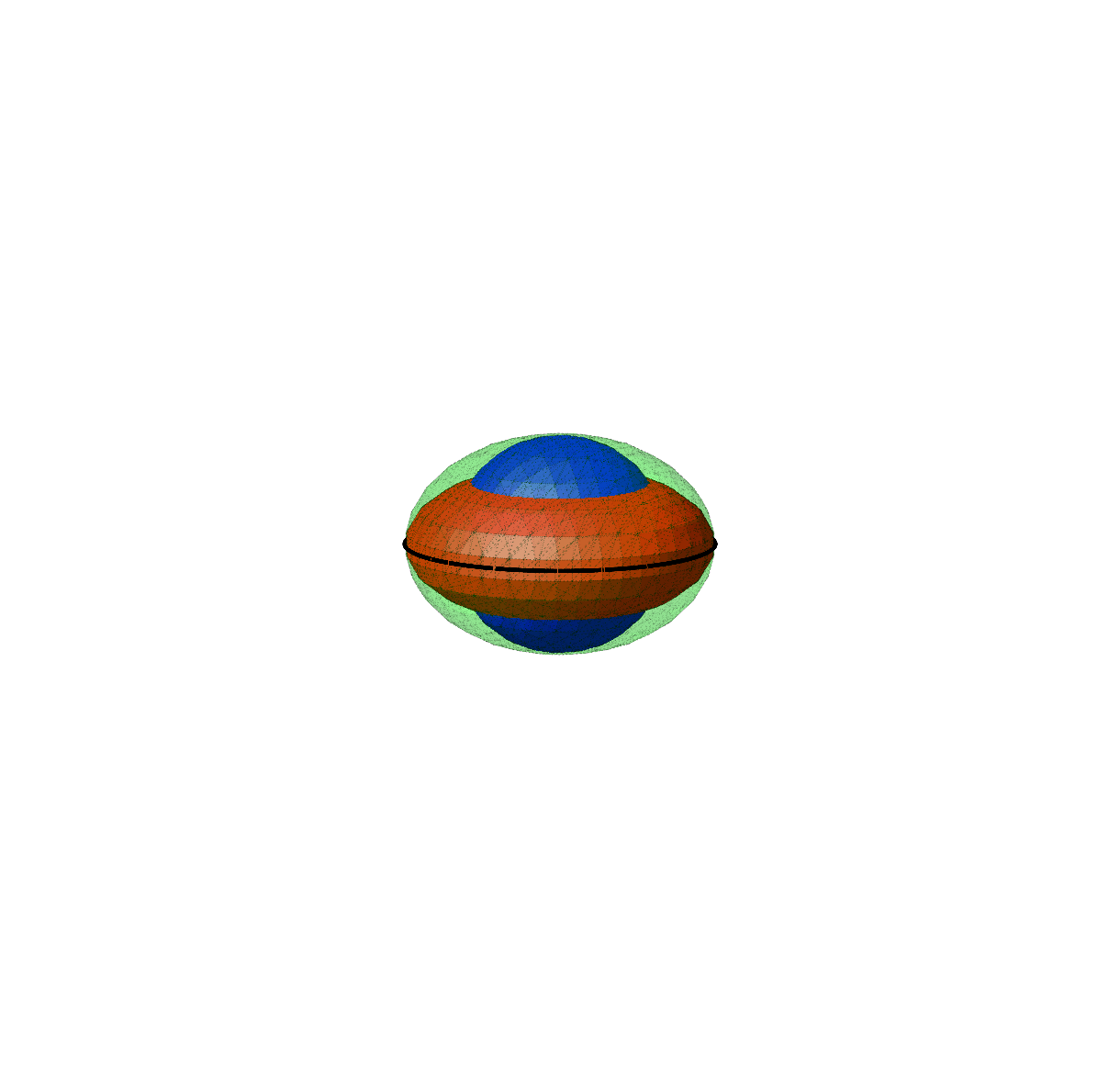}
\caption{${\mat} = (0 \;,0)$}
\end{subfigure}%
\begin{subfigure}{0.45\textwidth}
\centering
\includegraphics[width=\textwidth,trim={12cm 12cm 12cm 12cm},clip]{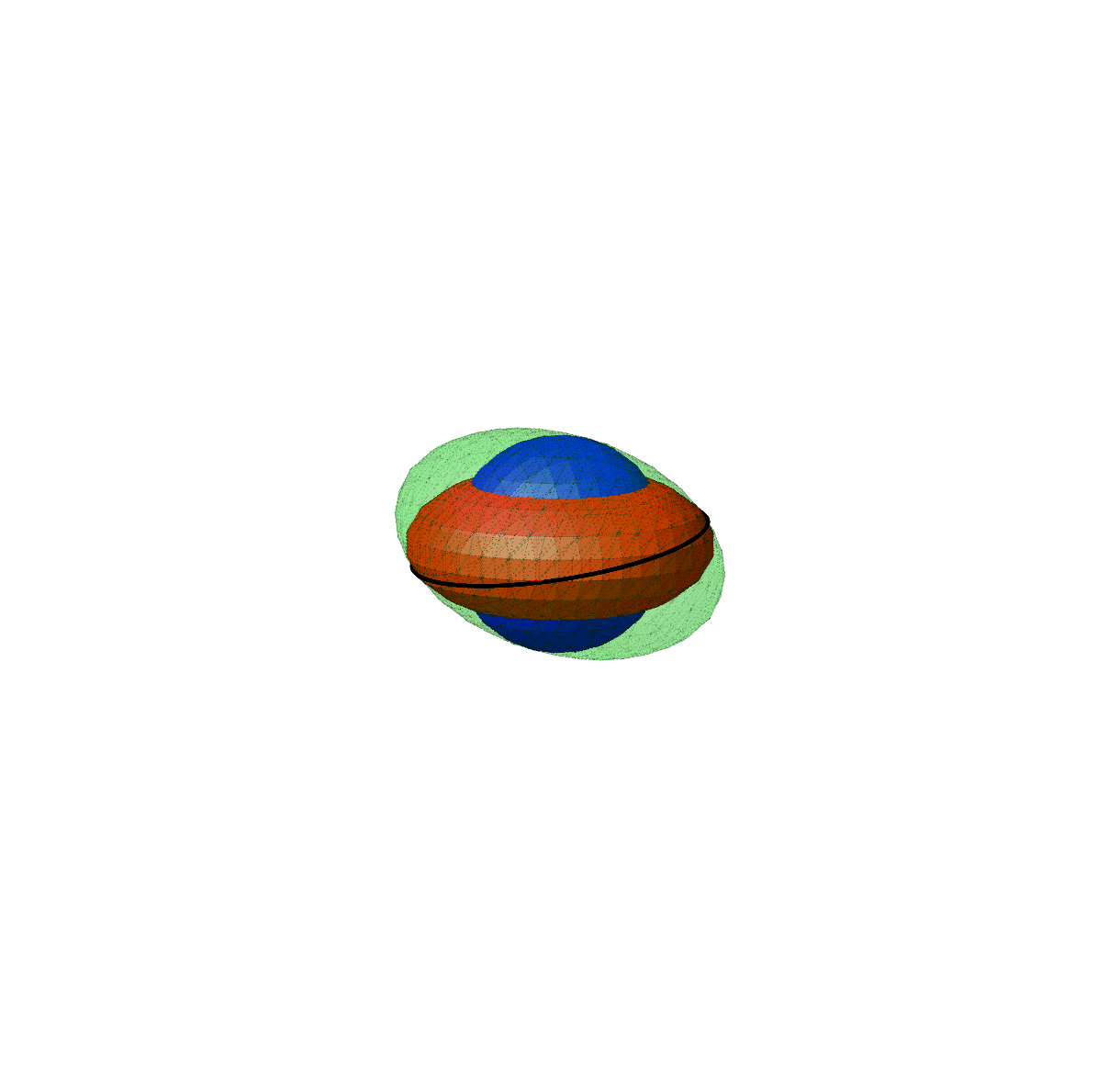}
\caption{${\mat} = (0 \;,0.25)$}
\end{subfigure}
\begin{subfigure}{0.45\textwidth}
\centering
\includegraphics[width=\textwidth,trim={12cm 12cm 12cm 12cm},clip]{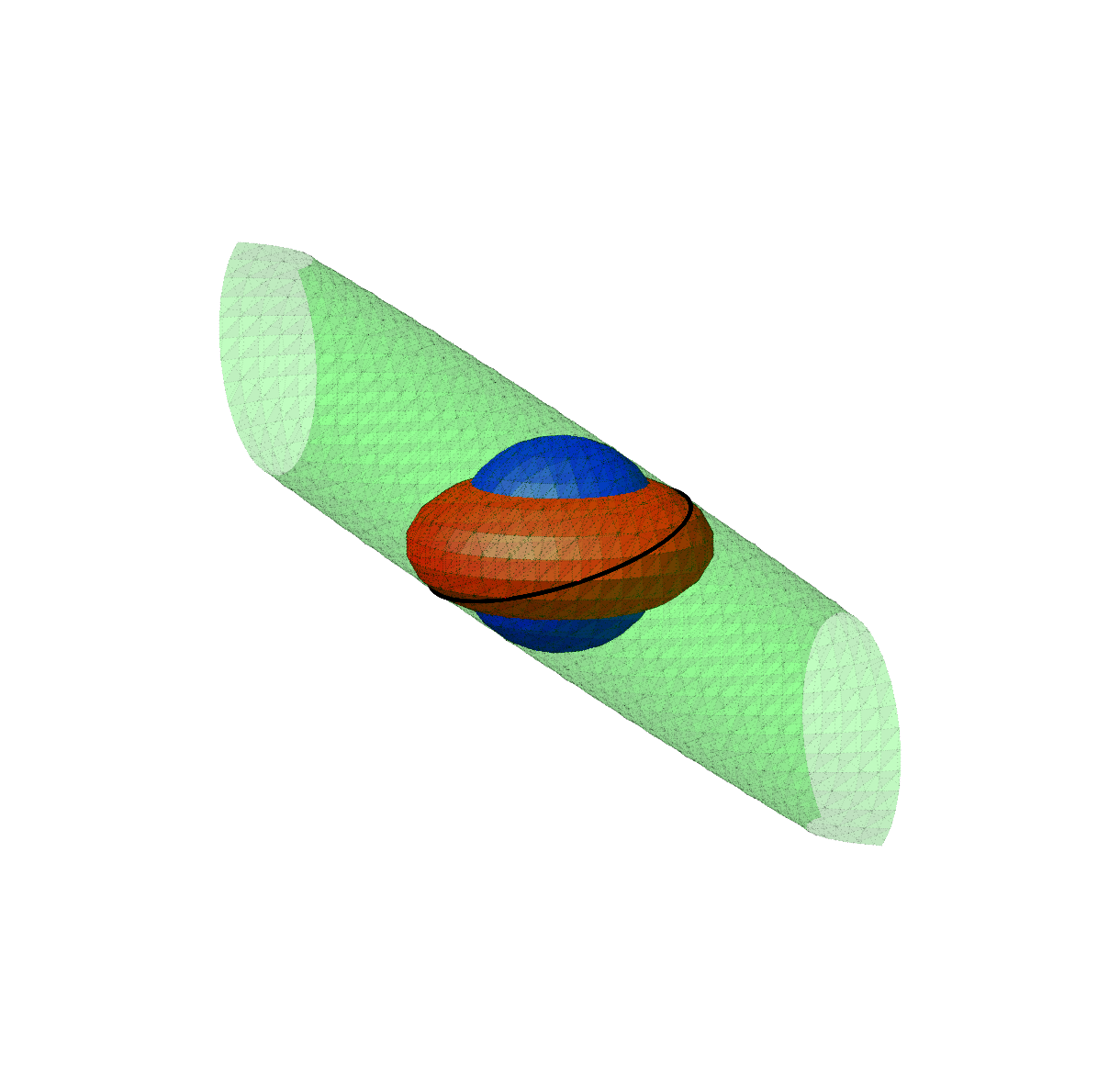}
\caption{${\mat} = (0 \;,0.549)$}
\end{subfigure}%
\begin{subfigure}{0.45\textwidth}
\centering
\includegraphics[width=\textwidth,trim={12cm 12cm 12cm 12cm},clip]{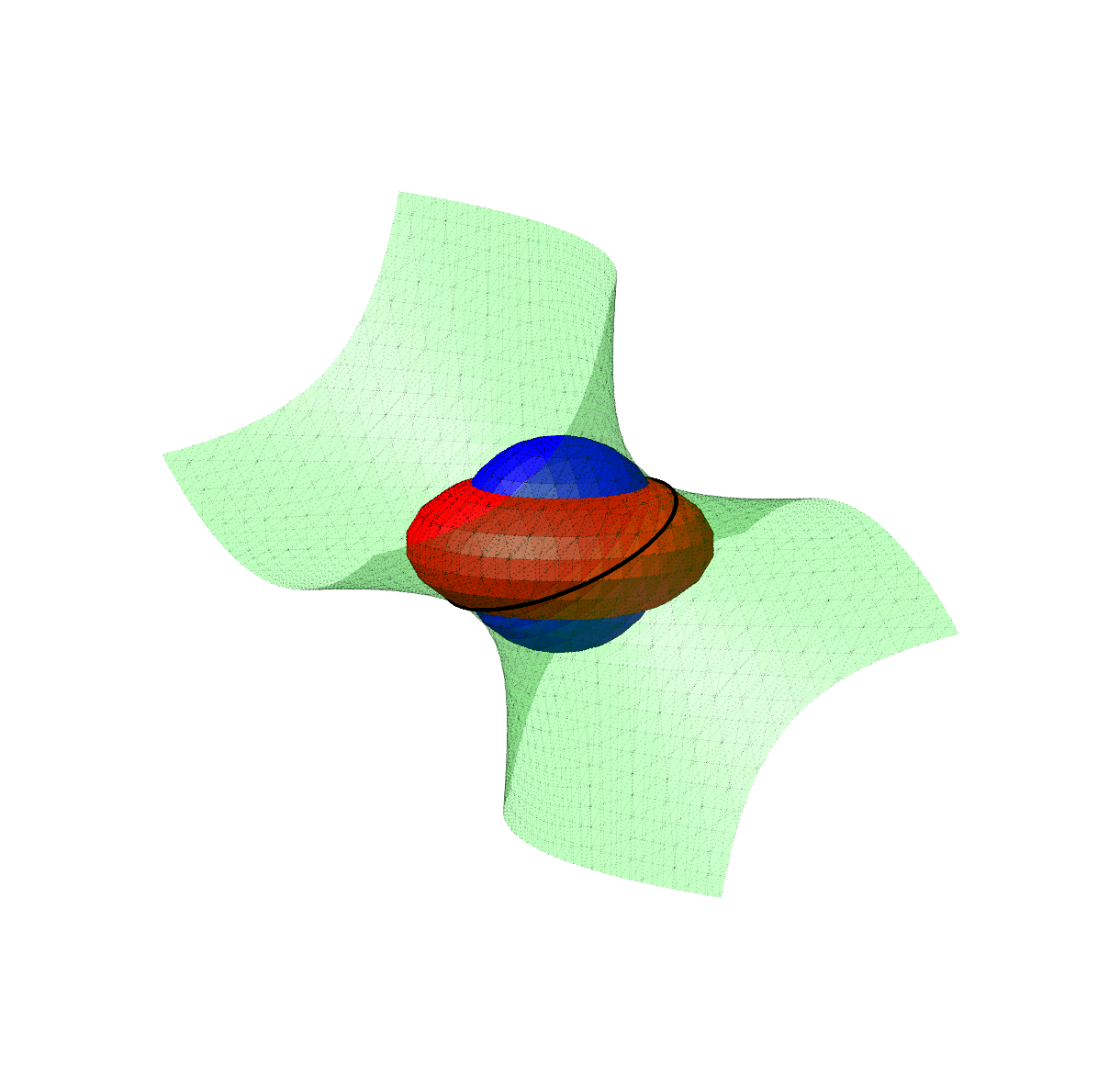}
\caption{${\mat} = (0 \;,1)$}
\end{subfigure}
\caption{Minimal quadrics $\Qc_{C_{\mat}}$ (in green) associated with $\Ec_A$ and $\Ec_B$ (in blue and red) for various values of ${\mat}$. The black line shows the tangency points between the quadrics $\Ec_B$ and $\Qc_{C_{\mat}}$.}
\label{fig:selection_dim3}
\end{figure}

%

\section*{Acknowledgements}
The author thanks St\'ephane Gaubert for pointing out the problem of parametrization of maximal lower bounds. The author also thanks him with Xavier Allamigeon for many useful remarks and suggestions on previous versions of this work. Finally, the author thanks Peter Semrl for pointing out the link with Kadison's result.

\bibliographystyle{amsplain}
\bibliography{loewner}

\end{document}